\renewcommand{\maketitle}{%
  \noindent\hrule height 1pt
  \vskip 0.7em
  \begin{center}
    {\Large\bfseries\scshape\MakeLowercase{\@title}\par}
  \end{center}
  \vskip 0.7em
  \noindent\hrule height 1pt
  \vskip 2em
  \begin{center}
    {\normalsize\bfseries\@author \par}
    \vskip 0.2em
    {\normalsize \emph{Department of Philosophy}\par}
    {\normalsize \emph{Universitat Autònoma de Barcelona}\par}
    {\normalsize \texttt{andreu.ballus@uab.cat}\par}
  \end{center}
  \vskip 2em
}
\newtheorem{theorem}{Theorem}[section]
\newtheorem{proposition}[theorem]{Proposition}
\newtheorem{lemma}[theorem]{Lemma}
\newtheorem{corollary}[theorem]{Corollary}
\theoremstyle{definition}
\newtheorem{definition}[theorem]{Definition}
\newtheorem{remark}[theorem]{Remark}
\newcommand{\Syn}{\mathsf{Syn}}
\newcommand{\AncQ}{\mathsf{AncQ}}
\newcommand{\FinCorel}{\mathbf{FinCorel}}
\newcommand{\FinSet}{\mathbf{FinSet}}
\newcommand{\Cospan}{\mathbf{Cospan}}
\newcommand{\ESCFM}{\mathbf{ESCFM}}
\newcommand{\Cocom}{\mathbf{Cocom}}
\newcommand{\Spc}{\mathbf{Spc}}
\newcommand{\Fun}{\mathrm{Fun}}
\newcommand{\Sub}{\mathbf{Sub}}
\newcommand{\Set}{\mathbf{Set}}
\newcommand{\Cat}{\mathbf{Cat}}
\newcommand{\PROP}{\mathbf{PROP}}
\newcommand{\opp}{^{\mathrm{op}}}
\newcommand{\id}{\mathrm{id}}
\newcommand{\Real}{\mathsf{Real}}
\newcommand{\Gluing}{\mathbf{Gluing}}
\newcommand{\cA}{\mathcal{A}}
\newcommand{\cB}{\mathcal{B}}
\newcommand{\cC}{\mathcal{C}}
\newcommand{\cD}{\mathcal{D}}
\newcommand{\cW}{\mathcal{W}}
\newcommand{\underm}{\underline{m}}
\newcommand{\undern}{\underline{n}}
\newcommand{\underp}{\underline{p}}
\newcommand{\ancest}{\widetilde{\pi}}
\title{From Copying to Corelations via Ancestry Partitions}
\author{Andreu Ballús Santacana}
\date{}
\begin{document}
\sloppy

\maketitle

\begin{center}
\fbox{\begin{minipage}{0.92\textwidth}
\small
\textbf{Changes from v1.} This paper is v2 of arXiv:2505.22931. The primitive used here is the generator $\delta:1\to 2$ of the free PROP $\Syn(\delta)$; v1 used a different categorical setup. We do not claim a formal correspondence between the two versions. The results below are proved in full or reduced explicitly to a cited classical theorem.
\end{minipage}}
\end{center}

\vskip 2em

\begin{center}
{\normalsize\bfseries\scshape Abstract}
\end{center}
\begin{quote}
\small
We study the free PROP $\Syn(\delta)$ on a single binary generator $\delta:1\to 2$. The ancestry functor $\Pi:\Syn(\delta)\to\FinCorel$ has image a proper sub-PROP $\FinCorel^\circ\subseteq\FinCorel$, and the induced quotient $\AncQ:=\Syn(\delta)/\ker(\Pi)$ is equivalent as a PROP to $\Cocom$, the PROP for non-counital cocommutative comonoids. We then place this primitive in the standard cospan/corelation framework: $\Cospan(\cB)$ realizes pushout-style gluing as a free hypergraph category (Fong--Spivak); $\Cospan(\FinSet)$ collapses by jointly-epic equivalence to $\FinCorel\simeq\ESCFM$ (Lack; Coya--Fong); the canonical Frobenius package on $1\in\Cospan(\cB)$ transports along the initial comparison functor; and the Yoneda envelope $\cW=\Fun(\FinCorel\opp,\Spc)$ is a presheaf $\infty$-topos in which the standard subobject, modality, and fixed-point apparatus is available. The PROP-level identification $\AncQ\simeq\Cocom$ is the only result claimed as new; the remaining content is organizational, with each universal property stated and cited explicitly.
\vskip 0.5em
\noindent\textbf{\small Keywords.}\ \small PROP; symmetric monoidal category; cocommutative comonoid; finite corelations; cospan category; special commutative Frobenius algebra; extraspecial commutative Frobenius monoid; Yoneda envelope; presheaf $\infty$-topos; Knaster--Tarski.
\vskip 0.3em
\noindent\textbf{\small MSC Classification:}\ \small 18M05 (primary); 18M35, 18M85, 18N60, 18F10, 03G30 (secondary).
\end{quote}

\vskip 2em

\section{Introduction and Main Results}
\label{sec:intro}

\subsection{Overview}

This paper studies a chain of categorical constructions starting from a single binary generator $\delta:1\to 2$. The free PROP $\Syn(\delta)$ admits an ancestry functor $\Pi$ to $\FinCorel$, and we identify the resulting quotient with the PROP for non-counital cocommutative comonoids (Theorem~\ref{thm:A}). The remaining content places this in the standard cospan/corelation framework and records the available semantic envelope. The cospan stage uses the free hypergraph category theorem of Fong--Spivak~\cite{FongSpivak}; the corelation stage uses the Lack--Coya--Fong identification of $\Cospan(\FinSet)/\ker(\bar\Pi)$ with $\FinCorel\simeq\ESCFM$~\cite{Lack2004,CoyaFong2017}; the Frobenius transport is a corollary of the cospan initiality together with functoriality of algebra objects; and the semantic envelope is the presheaf $\infty$-topos $\cW=\Fun(\FinCorel\opp,\Spc)$, with the standard subobject, modality, and fixed-point structure available by Lurie~\cite{LurieHTT} and Knaster--Tarski~\cite{Tarski1955}.

The primitive is a generator $\delta:1\to 2$, the minimal arity-changing operation in a symmetric monoidal setting. We do not take $\delta$ to be a coalgebra generator a priori; coassociativity and cocommutativity follow from the ancestry quotient.

The paper proceeds in four sections. Section~\ref{sec:syntax} constructs $\Syn(\delta)$, the ancestry functor $\Pi$, and the quotient $\AncQ$, and proves Theorem~\ref{thm:A}. Section~\ref{sec:gluing} reviews the cospan realization and proves the collapse identification. Section~\ref{sec:frobenius} states the Frobenius transport corollary. Section~\ref{sec:semantic} records the semantic envelope. Section~\ref{sec:scope} discusses scope and relation to companion work.

\smallskip
\noindent\textbf{Terminology and notation.} $\Syn(\delta)$ is the free PROP on a single binary generator $\delta:1\to 2$; the kernel congruence of the ancestry functor $\Pi$ (Definition~\ref{def:ancestry}) is denoted $\equiv_\Pi$, with the ancestry quotient written
\[
\AncQ \;:=\; \Syn(\delta) / \equiv_\Pi.
\]
By a \emph{gluing host} we mean an object of (a full $2$-subcategory of) the coslice $2$-category of hypergraph categories under a fixed base, in the sense of Fong--Spivak~\cite[\S 2.3]{FongSpivak} (Definition~\ref{def:gluing-host}).

\subsection{Main results}

The paper's results are Theorem~\ref{thm:A} (original) and four supporting statements stated below as Propositions~\ref{prop:B}, \ref{prop:E}, Theorem~\ref{thm:C} (with classical attribution), and Corollary~\ref{cor:D}.

\begin{theorem}[Image of the ancestry functor; Theorem~\ref{thm:cocom} below]
\label{thm:A}
The ancestry functor $\Pi: \Syn(\delta) \to \FinCorel$ is a strict symmetric monoidal functor. Its image is the sub-PROP $\FinCorel^\circ \subseteq \FinCorel$ of corelations $R: m \to n$ such that: (a) every equivalence class contains exactly one element of the input boundary; and (b) every equivalence class contains at least one element of the output boundary. The induced quotient $\AncQ := \Syn(\delta)/\ker(\Pi)$ is equivalent, as a PROP, to $\Cocom$, the PROP freely generated by a cocommutative coassociative comultiplication without counit.
\end{theorem}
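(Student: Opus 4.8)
The plan is to prove the three assertions in sequence, treating functoriality as formal, the image characterization as a soundness-and-completeness argument, and the quotient identification as the substantive point. First I would observe that $\Pi$ is strict symmetric monoidal by construction: since $\Syn(\delta)$ is the free PROP on the single generator $\delta:1\to 2$, the universal property reduces any strict symmetric monoidal functor out of it to a choice of image for $\delta$, and Definition~\ref{def:ancestry} fixes $\Pi(\delta)$ to be the corelation $1\to 2$ whose single class contains the input wire together with both output wires; the rest of the structure is then forced. Before anything else I would reformulate the target: a corelation $R:m\to n$ satisfying (a) and (b) has its classes in bijection with the input boundary (by (a)), while each output wire lies in a unique class and is thereby assigned to a unique input, with every input receiving at least one output (by (b)). Thus $\FinCorel^\circ(m,n)$ is canonically the set of surjections $n\twoheadrightarrow m$, and I would check that corelation composition corresponds contravariantly to composition of these surjections, so that $\FinCorel^\circ$ is a sub-PROP, isomorphic to the PROP whose morphisms $m\to n$ are the surjections $n\twoheadrightarrow m$.

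For the image characterization I would argue soundness by induction on the construction of morphisms in $\Syn(\delta)$. The base cases ($\id_1$, the symmetry $\sigma_{1,1}$, and $\delta$ itself) satisfy (a) and (b), and both $\otimes$ (disjoint union of corelations) and $\circ$ preserve the two conditions; the only nontrivial closure is under composition, which the surjection reformulation settles immediately, since a composite of surjections is a surjection. For completeness I would show every surjection $n\twoheadrightarrow m$ is realized: choosing a section exhibits it as a composite of symmetries with a tensor product of ``fan-out'' corelations $1\to k$, and each fan-out $1\to k$ is $\Pi$ of an iterated comultiplication $\delta_k:1\to k$ built from $\delta$. Hence the image is exactly $\FinCorel^\circ$, and since $\AncQ=\Syn(\delta)/\ker(\Pi)$ is by definition the image of $\Pi$ with morphisms identified precisely when their corelations agree, $\AncQ\cong\FinCorel^\circ$ as PROPs.

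It remains to identify $\FinCorel^\circ$ with $\Cocom$. I would first verify that coassociativity and cocommutativity lie in $\ker(\Pi)$: both $(\delta\otimes\id_1)\circ\delta$ and $(\id_1\otimes\delta)\circ\delta$ map under $\Pi$ to the single-class corelation $1\to 3$, and $\sigma_{1,1}\circ\delta$ maps to the same corelation as $\delta$, since class membership ignores wire order. Consequently the defining relations of $\Cocom$ are contained in $\ker(\Pi)$, yielding a full strict symmetric monoidal functor $\Cocom\to\AncQ\cong\FinCorel^\circ$ that sends the comultiplication to $\Pi(\delta)$. To upgrade fullness to an isomorphism I would establish a normal form for $\Cocom$: modulo coassociativity and cocommutativity every morphism $m\to n$ rewrites to a canonical ``sorted forest'' in which the $n$ outputs are partitioned into $m$ nonempty, order-normalized blocks, one per input, so that $\Cocom(m,n)$ is in bijection with surjections $n\twoheadrightarrow m$. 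Matching this against the reformulation of $\FinCorel^\circ(m,n)$ from the first paragraph shows the functor is a bijection on each finite hom-set, hence faithful, hence an isomorphism of PROPs; this gives $\AncQ\simeq\Cocom$.

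The main obstacle is the faithfulness step, i.e.\ the completeness of the equational theory: one must show that coassociativity and cocommutativity generate \emph{all} of $\ker(\Pi)$, with no further relations needed and none missing. I would handle this through the normal-form theorem above, proving that the rewriting system (reassociate to a right-combed comb, then sort the leaves) is terminating and confluent so that normal forms are unique, and that $\Pi$ separates distinct normal forms by returning distinct surjections. The bookkeeping of boundary wires under $\otimes$ and under the symmetry generators, together with the verification that the leaf-sorting step is well-defined modulo cocommutativity, are the places where care is required; everything else follows directly from the universal property of $\Syn(\delta)$ and the surjection reformulation of $\FinCorel^\circ$.
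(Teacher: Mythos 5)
Your proposal is correct in outline and shares the paper's skeleton: prove $\Pi$ is strict symmetric monoidal, characterize the image by a soundness (closure) argument plus a completeness (realization by iterated fan-outs) argument, identify $\AncQ$ with the image, verify that coassociativity and cocommutativity lie in $\ker(\Pi)$ to obtain a full functor $\Cocom\to\AncQ$, and finish by describing both sides as surjections $\undern\twoheadrightarrow\underm$. The genuine divergence is the faithfulness step, which is where the substance lies. The paper does not prove the surjection description of $\Cocom$ at all: Remark~\ref{rem:cocom-comb} imports it from Pirashvili's classical identification of the sub-PROP $\Omega\subseteq\mathsf{F}$ of surjections as the PROP for non-unital commutative associative algebras, so that $\Cocom\simeq\Omega\opp$ by dualization, and faithfulness then reduces to the one-line observation that two functions with nonempty fibers inducing the same ancestry partition are equal. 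You propose instead to prove that description from scratch, via a terminating and confluent rewriting system (right-combing plus leaf sorting) yielding unique sorted-forest normal forms. This is a legitimately different, self-contained route, but the work it demands is precisely what the citation is there to avoid: your rewriting must be carried out modulo the symmetric monoidal axioms and modulo cocommutativity, and termination/confluence in that setting is substantially more delicate than the phrase ``sorted forest'' suggests (this is the territory of string-diagram rewriting modulo structure, which the paper deliberately declines to enter in Section~\ref{sec:scope}); what the paper's route buys is exactly the excision of that labor, at the cost of resting on an external theorem. Two further caveats. First, your claim that strict symmetric monoidality of $\Pi$ is ``forced'' by freeness produces \emph{a} functor with the prescribed value on $\delta$, but the theorem concerns the connected-components assignment of Definition~\ref{def:ancestry}; one must still check that this assignment is functorial (gluing of graphs computes transitive closure of relations, the content of Theorem~\ref{thm:ancestry-functor}), or equivalently prove by induction that your freely extended functor computes connected components --- the verification does not disappear. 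Second, your ``immediate'' closure of $\FinCorel^\circ$ under composition presupposes the contravariant dictionary between corelation composition and composition of surjections, which is itself the chain/transitive-closure computation of Proposition~\ref{prop:circ-closed}; it is easy, but it is not free.
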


\begin{proposition}[Cospan realization; Proposition~\ref{prop:initiality} below]
\label{prop:B}
For any finitely cocomplete category $\cB$, the symmetric monoidal category $\Cospan(\cB)$, equipped with the canonical embedding $\iota: \cB \to \Cospan(\cB)$, is initial in the $2$-category of gluing hosts over $\cB$.
\end{proposition}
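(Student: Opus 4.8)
The plan is to unpack Definition~\ref{def:gluing-host}: a gluing host over $\cB$ is a hypergraph category $\cC$ equipped with a structure functor from $\cB$, and a morphism of gluing hosts is a hypergraph functor commuting with the structure functors up to a coherent invertible $2$-cell. Initiality then splits into two tasks: verifying that $(\Cospan(\cB),\iota)$ is itself a gluing host, and showing that for every gluing host $(\cC,F)$ the category of gluing-host morphisms $(\Cospan(\cB),\iota)\to(\cC,F)$ is contractible, i.e. has one object and only the identity $2$-cell, up to canonical isomorphism.

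First I would record the hypergraph structure on $\Cospan(\cB)$. Finite cocompleteness of $\cB$ supplies an initial object $\varnothing$ and binary coproducts, serving as the monoidal unit and product, and it supplies the pushouts that define composition of cospans. On each object $X$ the special commutative Frobenius maps are the cospans built from the canonical cocartesian data: the multiplication $\mu_X$ is $X+X\xrightarrow{\nabla}X\xleftarrow{\id}X$, the unit $\eta_X$ is $\varnothing\xrightarrow{!}X\xleftarrow{\id}X$, and the comultiplication and counit are the opposite cospans. The special and Frobenius laws reduce to elementary pushout computations with coproducts. Crucially, the monoid part $(\mu_X,\eta_X)$ is exactly $\iota(\nabla)$ and $\iota(!)$, so $\iota$ carries the canonical commutative-monoid structure that every object of $(\cB,+,\varnothing)$ possesses onto the Frobenius monoids; this is what makes $(\Cospan(\cB),\iota)$ a gluing host and pins down the comparison functor below.

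For existence of the comparison functor I would use the factorization of an arbitrary cospan through the generators. Every cospan $c=(m\xrightarrow{f}a\xleftarrow{g}n)$ is the composite of the left-leg cospan $\iota(f)$ with the transpose (the right-leg cospan) of $\iota(g)$, where the transpose $(-)^{\mathrm t}$ is the wire-bending operation furnished by the Frobenius structure. I would therefore set $\bar F(X)=F(X)$ on objects and $\bar F(c)=F(g)^{\mathrm t}\circ F(f)$ on morphisms, a choice forced, up to the structural isomorphism $\bar F\circ\iota\cong F$, by preservation of transposes. The hard part will be well-definedness: one must check that $\bar F(c)$ is independent of the apex (invariance under apex isomorphisms) and that $\bar F$ respects pushout composition and the monoidal product. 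Invariance under an apex iso $k:a\to a'$ comes down to the relation $\iota(k)^{\mathrm t}=\iota(k^{-1})$, which holds in $\Cospan(\cB)$ and which a gluing host satisfies in the image precisely because $F$ is compatible with the canonical monoid structures (the content of the full-$2$-subcategory restriction on gluing hosts); compatibility with pushout composition is the statement that $\bar F$ converts pushout gluing into Frobenius composition in $\cC$. This is exactly the free-hypergraph-category theorem of Fong--Spivak~\cite[\S2.3]{FongSpivak}, whose proof organizes these checks through a generators-and-relations presentation of $\Cospan(\cB)$; I would reduce to that theorem after matching the definitions rather than re-deriving every coherence by hand.

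Finally I would settle uniqueness and the $2$-categorical claim. Because $\iota(\cB)$ together with the Frobenius comonoid maps (the transposes) generate $\Cospan(\cB)$ as a hypergraph category, any gluing-host morphism $G$ with $G\circ\iota\cong F$ is determined on generators, hence agrees with $\bar F$ up to a canonical invertible $2$-cell assembled from the given coherence isomorphism; the same generation argument shows this $2$-cell is the unique one. Thus the hom-category of gluing-host morphisms is contractible, and $(\Cospan(\cB),\iota)$ is initial, as claimed.
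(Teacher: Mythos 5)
Your proposal is correct in outline and, like the paper, ultimately reduces the substantive coherence checks to the Fong--Spivak free hypergraph category theorem; the genuine difference is the decomposition used to pin down the comparison functor. The paper's proof sketch of Proposition~\ref{prop:initiality} invokes Fong--Spivak's Lemma~3.6: each leg of a cospan is factored as a permutation followed by an order-preserving surjection followed by an order-preserving injection, so every cospan decomposes into tensors and composites of $\iota$-images, canonical SCFA generators, and symmetries, with preservation of pushout composition handled by the normal form of Fong--Spivak's Proposition~3.8. You instead use the transpose factorization $c = \iota(g)^{\mathrm{t}} \circ \iota(f)$ for a cospan $c = (m \xrightarrow{f} a \xleftarrow{g} n)$, yielding the more economical explicit formula $\bar F(c) = F(g)^{\mathrm{t}} \circ F(f)$, and you correctly isolate where the real work lies: well-definedness under apex isomorphism (which you reduce to the relation $\iota(k)^{\mathrm{t}} = \iota(k^{-1})$), preservation of pushout composition, and preservation of tensor --- checks you then defer to the same citation the paper defers to. What your route buys is a cleaner, formula-level determination of the comparison functor; what the paper's route buys is closer alignment with the statement actually proved by Fong--Spivak, so that the reduction is more nearly a restatement. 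Two caveats on your reconstructed definitions: the paper's $1$-morphisms of gluing hosts (Definition~\ref{def:gluing-host}) commute with the structure functors \emph{strictly} ($H \circ j = j'$, and the comparison functor satisfies $\Phi \circ \iota = j$ on the nose), not merely up to a coherent invertible $2$-cell as you assume; and your unpacking of the definition omits the unit coherence axiom ($\eta_I = \id_I = \epsilon_I$), which Remark~\ref{rem:unit-coherence} flags as independent of the remaining clauses and essential to the strictification step of the cited theorem. Neither caveat breaks your argument, since both versions funnel into the same cited result, but under the paper's strict convention your uniqueness clause simplifies: a gluing-host morphism agrees with $\bar F$ on $\iota$-images exactly, not just up to a $2$-cell assembled from a structure isomorphism.
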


\begin{theorem}[Lack; Coya--Fong; Theorem~\ref{thm:completeness} below]
\label{thm:C}
The ancestry functor $\bar\Pi: \Cospan(\FinSet) \to \FinCorel$ extending the inclusion on the boundary is a strict symmetric monoidal functor. The induced quotient $\Cospan(\FinSet)/\ker(\bar\Pi)$ is equivalent as a PROP to $\FinCorel \simeq \ESCFM$, the PROP for extraspecial commutative Frobenius monoids.
\end{theorem}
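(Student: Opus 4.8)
The plan is to realize $\bar\Pi$ as the jointly-epic collapse of cospans and then chain the two cited identifications. First I would fix $\bar\Pi$ on morphisms: a cospan $\underm \xrightarrow{f} a \xleftarrow{g} \undern$ is sent to the corelation on $\underm\sqcup\undern$ whose classes are the nonempty fibers of the copairing $[f,g]:\underm\sqcup\undern\to a$, i.e.\ $x\sim y$ iff $[f,g](x)=[f,g](y)$. On objects $\bar\Pi$ is the identity, matching the boundary inclusion asserted in the statement. Strict monoidality is then immediate: the tensor of cospans is computed componentwise (apex $a\sqcup a'$), and the joint-image partition of a disjoint union is the disjoint union of the two partitions, so the monoidal unit, the tensor, and the symmetry are all preserved on the nose. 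The only substantive check is functoriality, which I isolate below.

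The structural input is the jointly-epic factorization in $\Cospan(\FinSet)$. Every cospan $\underm\to a\leftarrow\undern$ factors canonically as a jointly-epic cospan $\underm\to a'\leftarrow\undern$, where $a'\subseteq a$ is the union of the two images, followed by the inclusion $a'\hookrightarrow a$; and $\bar\Pi$ plainly depends only on $a'$. Hence two cospans lie in the same $\ker(\bar\Pi)$-class exactly when their jointly-epic cores agree up to the unique boundary-fixing isomorphism. Since a finite corelation is by definition an isomorphism class of jointly-epic cospans --- equivalently a partition of $\underm\sqcup\undern$ --- this matches the hom-sets of the quotient $\Cospan(\FinSet)/\ker(\bar\Pi)$ with those of $\FinCorel$. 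The induced functor is thus faithful by construction of the kernel, essentially surjective because it is the identity on objects, and full because every partition is the joint image of its own quotient-map cospan $\underm\sqcup\undern\to(\underm\sqcup\undern)/{\sim}$; so it is an equivalence.

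The main obstacle is functoriality of $\bar\Pi$, i.e.\ the compatibility of pushout-composition of cospans with transitive-closure composition of corelations. Given composable cospans $\underm\to a\leftarrow\undern$ and $\undern\to b\leftarrow\underp$, their composite apex is the pushout $a+_{\undern}b$, which in $\FinSet$ is the coequalizer gluing the two copies of $\undern$; unwinding the induced copairing shows that the joint-image partition on $\underm\sqcup\underp$ is precisely the transitive closure of the two joint-image partitions along their shared $\undern$, restricted to the outer boundary --- which is the definition of corelation composition. This is the single place where the finite-colimit structure of $\FinSet$ does real work, and it is exactly the jointly-epic-quotient analysis of Lack; I would cite \cite{Lack2004} for the resulting identification $\Cospan(\FinSet)/\ker(\bar\Pi)\simeq\FinCorel$.

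Finally I would compose with the Coya--Fong presentation \cite{CoyaFong2017}, which exhibits $\FinCorel$ as the PROP $\ESCFM$ for extraspecial commutative Frobenius monoids. Under the equivalence the comultiplication, multiplication, unit, and counit are the connected corelations on $\underline{1}\sqcup\underline{2}$, $\underline{2}\sqcup\underline{1}$, $\underline{0}\sqcup\underline{1}$, and $\underline{1}\sqcup\underline{0}$; the commutative Frobenius and special laws hold because composition merges classes idempotently, and the extra (``bone'') law $\varepsilon\circ\eta=\id_0$ holds because the only corelation $0\to 0$ is $\id_0$ --- the image of the empty cospan. Composing the two equivalences gives $\Cospan(\FinSet)/\ker(\bar\Pi)\simeq\FinCorel\simeq\ESCFM$ as PROPs, which is the assertion. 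The one point of care is to invoke the extraspecial rather than merely special presentation, since that qualifier is exactly what the empty-apex corelation forces.
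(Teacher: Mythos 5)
Your proposal is correct and follows essentially the same route as the paper: the same definition of $\bar\Pi$ via the joint-image partition, the same pushout-equals-transitive-closure check for functoriality, fullness via the quotient-map cospan $\underm\sqcup\undern\to(\underm\sqcup\undern)/{\sim}$, faithfulness by construction of the kernel quotient, and the Lack/Coya--Fong citations for the $\ESCFM$ identification. Your explicit description of the $\ker(\bar\Pi)$-classes as jointly-epic cores up to unique boundary-fixing isomorphism is a nice sharpening, but it plays the same role as the paper's observation that the special-to-extraspecial passage corresponds to jointly-epic corestriction.
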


\begin{corollary}[Frobenius transport; Corollary~\ref{cor:transport} below]
\label{cor:D}
Let $(\cA, j, \Phi)$ be a gluing host over a finitely cocomplete base $\cB$. The canonical special commutative Frobenius algebra structure on $1 \in \Cospan(\cB)$ transports via the initial comparison functor to a canonical special commutative Frobenius structure on $j(1) \in \cA$. The groupoid of compatible Frobenius structures on $j(1)$ -- those arising from a strong symmetric monoidal functor $\Cospan(\cB) \to \cA$ extending $j$ -- is contractible.
\end{corollary}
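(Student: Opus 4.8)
The plan is to deduce the corollary entirely from the $2$-categorical universal property of $\Cospan(\cB)$ recorded in Proposition~\ref{prop:B}, treating the Frobenius transport as a routine instance of structure-preservation by a strong symmetric monoidal functor. First I would recall the canonical special commutative Frobenius algebra on $1\in\Cospan(\cB)$ supplied by the hypergraph structure of Fong--Spivak: the multiplication $\mu:1\otimes 1\to 1$ and counit $\epsilon:1\to 0$ (where $0$ is the monoidal unit, i.e.\ the initial object of $\cB$) are the cospans whose active legs are the fold map $\nabla$ and the unique map into $0$, while the comultiplication $\delta$ and unit $\eta$ are their mirror images; the special, commutative, and Frobenius laws hold because each is encoded by a commuting diagram assembled from coproduct and pushout data in $\cB$. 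This is standard, so I would merely cite it and fix notation.

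Next comes the transport. Proposition~\ref{prop:B} yields, for the gluing host $(\cA,j,\Phi)$, a strong symmetric monoidal functor $F:\Cospan(\cB)\to\cA$ together with a monoidal natural isomorphism $\theta:F\iota\cong j$, essentially unique with this property. A strong symmetric monoidal functor sends special commutative Frobenius algebras to special commutative Frobenius algebras: the images $F\mu,F\eta,F\delta,F\epsilon$, pre- and post-composed with the coherence isomorphisms of $F$, satisfy the defining equations because $F$ carries each defining commuting diagram to a commuting diagram and the coherence cells cancel in the usual way. Applying this to the canonical structure on $1$ and transporting along the component $\theta_1:F(1)\cong j(1)$ produces the asserted canonical special commutative Frobenius structure on $j(1)$. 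This step is bookkeeping with coherence isomorphisms, and I expect no real difficulty here.

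The contractibility claim carries the actual content. Let $\mathrm{Ext}(j)$ denote the groupoid whose objects are strong symmetric monoidal extensions $(F,\theta)$ of $j$ and whose morphisms are monoidal natural isomorphisms $\alpha$ compatible with the $\theta$'s; read as $2$-initiality, Proposition~\ref{prop:B} says exactly that $\mathrm{Ext}(j)\simeq *$. Evaluation at the Frobenius object defines a functor $\mathrm{ev}:\mathrm{Ext}(j)\to\mathrm{Frob}(j(1))$ landing in the compatible structures, essentially surjective by the very definition of ``compatible.'' To conclude I would show $\mathrm{ev}$ is an equivalence onto its essential image, so that contractibility transfers. Faithfulness is immediate once one notes that $\iota$ is the identity on objects: the compatibility constraint $\theta'\circ(\alpha\iota)=\theta$ pins down each component $\alpha_{\iota(b)}=\theta'^{-1}_b\circ\theta_b$, and since every object of $\Cospan(\cB)$ is some $\iota(b)$, the morphism $\alpha$ is completely determined by $\theta,\theta'$. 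In particular the induced comparison between any two compatible structures is forced, and nonemptiness and connectedness are free.

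The main obstacle is fullness of $\mathrm{ev}$, together with the correct handling of automorphisms: I must show that every isomorphism of compatible Frobenius structures on $j(1)$ is induced by a (necessarily unique) morphism of extensions, equivalently that the transported structure carries no symmetries beyond those tracked by $\mathrm{Ext}(j)$. The delicate point is that a Frobenius-structure automorphism of the \emph{object} $j(1)$ need not be a symmetry of the extension data: any $\phi\in\mathrm{Aut}_\cB(1)$ already fixes the canonical structure on $1\in\Cospan(\cB)$, yet gives only the identity automorphism of a fixed $(F,\theta)$ in $\mathrm{Ext}(j)$. I would therefore argue that the compatible isomorphisms are precisely those realized by morphisms of extending functors, so that the object-level symmetries in $\mathrm{Aut}_\cB(1)$ are absorbed by the coherence data rather than counted as morphisms of compatible structures; in the representative case $\cB=\FinSet$ this is clean, since by Theorem~\ref{thm:C} a structure-automorphism of $1$ corresponds to an automorphism of $1$ in $\FinCorel$, where $\mathrm{Aut}(1)$ is trivial, and the rigidity then transports through the essentially unique $F$. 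Making this separation of structure-symmetries from object-symmetries precise, and thereby upgrading $\mathrm{ev}$ to an equivalence, is the crux on which the contractibility rests.
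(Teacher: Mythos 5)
Your existence step is exactly the paper's: the canonical SCFA of Proposition~\ref{prop:scfa-canonical} is pushed forward along the comparison functor of Proposition~\ref{prop:initiality}, using preservation of algebraic structure by strong symmetric monoidal functors (the paper isolates this as Lemma~\ref{lem:transport}). Your reduction of contractibility to $\mathrm{Ext}(j)\simeq\ast$ is likewise the paper's use of the uniqueness clause of initiality. Where you diverge is in what you call the crux: the paper has no fullness step at all. It reads ``compatible'' as restricting the \emph{morphisms} of the groupoid as well as the objects --- the only isomorphisms counted are those induced by the (unique) monoidal natural isomorphisms between extending functors --- so the evaluation functor is full and essentially surjective by definition, and contractibility follows from $\mathrm{Ext}(j)\simeq\ast$ with nothing left to prove.

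This matters because the argument you sketch for the crux would fail, and under the alternative reading (all Frobenius-object isomorphisms of $j(1)$ count as morphisms) the statement is actually false, so no repair is possible. Rigidity of $1$ in $\Cospan(\FinSet)$ or $\FinCorel$ does \emph{not} ``transport through the essentially unique $F$'': a functor maps automorphisms of $1$ to automorphisms of $j(1)$, but it cannot prevent $j(1)$ from having Frobenius automorphisms outside its image. Concretely, take $\cB=\FinSet$, let $A=k\times k$ be the special commutative Frobenius algebra in finite-dimensional $k$-vector spaces with coordinatewise product, $\delta(e_i)=e_i\otimes e_i$, $\epsilon(e_i)=1$, let $G:\Cospan(\FinSet)\to\mathbf{Vect}_k$ be its classifying functor (Lack, via Proposition~\ref{prop:scfa-canonical}), and set $j:=G\circ\iota$. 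This is a gluing host, the transported structure on $j(1)=A$ is the given one, and the coordinate swap is a nonidentity Frobenius automorphism of $A$; so the groupoid with all Frobenius isomorphisms as morphisms has nontrivial automorphism groups and is not contractible. Your own fallback --- declare the compatible isomorphisms to be precisely those realized by morphisms of extensions --- is the correct resolution and the one the paper intends; once adopted, the ``separation of structure-symmetries from object-symmetries'' is definitional rather than a theorem, and the detour through Theorem~\ref{thm:completeness} should be deleted.
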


\begin{proposition}[Semantic envelope; Section~\ref{sec:semantic}]
\label{prop:E}
The Yoneda envelope $\cW := \Fun(\FinCorel\opp, \Spc)$ is a presheaf $\infty$-topos. For every $X \in \cW$, the poset $\Sub_{-1}(X)$ of $(-1)$-truncated subobjects is a complete Heyting algebra. Every monotone $F\colon \Sub_{-1}(X)^m \to \Sub_{-1}(X)^m$ admits least and greatest fixed points.
\end{proposition}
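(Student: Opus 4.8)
The plan is to establish the three assertions in sequence: that $\cW$ is a presheaf $\infty$-topos, that $\Sub_{-1}(X)$ is a complete Heyting algebra for each $X$, and that monotone operators on $\Sub_{-1}(X)^m$ admit extremal fixed points. First I would verify that $\FinCorel$ is essentially small: as a PROP it has one object for each natural number, and each hom-set $\FinCorel(m,n)$ is a finite set of corelations, so its nerve is a small $\infty$-category. The $\infty$-category of $\Spc$-valued presheaves on a small $\infty$-category is the prototype of a presheaf $\infty$-topos in the sense of Lurie~\cite{LurieHTT}, so $\cW = \Fun(\FinCorel\opp,\Spc)$ is an $\infty$-topos of exactly this form. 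This settles the first assertion directly, with no further work.

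For the frame structure I would use the identification of subobjects in an $\infty$-topos with $(-1)$-truncated morphisms. In $\cW$ a monomorphism into $X$ is precisely a $(-1)$-truncated map, and equivalence classes of such form an honest poset, since mapping spaces between $(-1)$-truncated objects of the slice $\cW_{/X}$ are themselves $(-1)$-truncated; presentability of $\cW$ makes this poset small. I would then exhibit the lattice operations: the top is $\id_X$, the bottom is the monomorphism $\emptyset\hookrightarrow X$ (initial objects in an $\infty$-topos are strict, hence include monically), arbitrary meets are iterated pullbacks over $X$, and the join of a family $(U_i)$ is obtained by $(\text{effective epi},\text{mono})$-factoring the canonical map $\coprod_i U_i\to X$ from the coproduct. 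The decisive input is the $\infty$-topos axiom that colimits are universal, i.e.\ stable under pullback; together with stability of the image factorization under base change this yields the infinitary distributive law $a\wedge\bigvee_i b_i=\bigvee_i(a\wedge b_i)$, making $\Sub_{-1}(X)$ a frame. The Heyting implication is then formal: for fixed $a$ the join-preserving operation $a\wedge(-)$ on the complete lattice $\Sub_{-1}(X)$ has a right adjoint by the adjoint functor theorem for posets, and this adjoint is the relative pseudocomplement $a\Rightarrow(-)$. Hence $\Sub_{-1}(X)$ is a complete Heyting algebra.

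The fixed-point claim is then a purely order-theoretic consequence. A complete Heyting algebra is in particular a complete lattice, and a finite product of complete lattices is again complete under the componentwise order, with meets and joins computed coordinatewise; thus $\Sub_{-1}(X)^m$ is a complete lattice. The Knaster--Tarski theorem~\cite{Tarski1955} guarantees that every monotone endomorphism of a complete lattice has a least fixed point $\bigwedge\{x : F(x)\le x\}$ and a greatest fixed point $\bigvee\{x : x\le F(x)\}$ (indeed the set of fixed points is itself a complete lattice), and this applies verbatim to $F\colon\Sub_{-1}(X)^m\to\Sub_{-1}(X)^m$.

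I expect the main obstacle to be the second step, and within it the infinitary distributive law rather than completeness. Completeness and the existence of meets are comparatively soft, following from presentability together with the existence of pullbacks; but distributivity genuinely uses universality of colimits and the stability of the $(\text{effective epi},\text{mono})$ factorization under base change, which are the substantive descent axioms of an $\infty$-topos. One must also take care that $\Sub_{-1}(X)$ is a set and not a proper class, which is again supplied by presentability. Once distributivity is secured, the Heyting structure and the Knaster--Tarski application follow with no additional geometric content.
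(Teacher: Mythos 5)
Your proposal is correct, and its overall skeleton matches the paper's: both reduce the first claim to Lurie's theorem that presheaves of spaces on a small $\infty$-category form an $\infty$-topos, and both dispatch the third claim by observing that a finite power of a complete lattice is complete and invoking Knaster--Tarski. The difference lies in the middle step. The paper treats the complete Heyting algebra structure of $\Sub_{-1}(X)$ as a citation (``standard for any $\infty$-topos''), obtaining meets and joins from limits and colimits and Heyting implication from \emph{local cartesian closedness} --- i.e.\ the dependent-product adjoint $\forall_f$, consistent with its Proposition on the adjoint calculus $\exists_f \dashv f^* \dashv \forall_f$. You instead take the \emph{frame-theoretic} route: you construct joins explicitly as images (effective epi--mono factorizations) of $\coprod_i U_i \to X$, derive the infinitary distributive law from universality of colimits and base-change stability of the factorization, and then obtain implication from the adjoint functor theorem for posets. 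Both routes are standard and valid; yours is more self-contained and isolates exactly which descent axioms carry the weight, and it is in one respect more careful than the paper's phrasing, since a colimit of subobjects is not itself a subobject until one passes to its image --- a point the paper's ``inherited from small colimits'' glosses over. Your explicit check that $\FinCorel$ is small (countably many objects, finite hom-sets) is likewise handled in the paper only by a convention declaring $\FinCorel$ a small $\infty$-category. What the paper's approach buys is brevity and direct traceability to Lurie and Shulman; what yours buys is an argument readable without unwinding those references.
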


\subsection{Relation to existing results}

Theorem~\ref{thm:A} is close to folklore for the comultiplication fragment of cocommutative comonoid PROPs; see Pirashvili~\cite{Pirashvili2002} for the commutative-monoid analogue, and Fong~\cite{FongThesis} and Coya--Fong~\cite{CoyaFong2017} for systematic use of corelations as PROPs. The precise statement that the image of the free PROP on $\delta:1\to 2$ under the ancestry functor coincides with the non-counital cocommutative comonoid PROP does not, to the author's knowledge, appear elsewhere in this explicit form.

Proposition~\ref{prop:B} is the Fong--Spivak free hypergraph category theorem~\cite[Thm.~3.14]{FongSpivak} restated in the local terminology of \S\ref{ssec:gluing-hosts}. Theorem~\ref{thm:C} combines Lack's classification~\cite[\S 5.4]{Lack2004} of $\Cospan(\FinSet)$ as the PROP for special commutative Frobenius monoids with Coya--Fong's identification~\cite[Thm.~4.6]{CoyaFong2017} of $\FinCorel$ with the PROP for extraspecial commutative Frobenius monoids; the passage from the former to the latter is by the extra law (corresponding to jointly-epic corestriction; see Coya--Fong~\cite[Lem.~6.3]{CoyaFong2017}). Corollary~\ref{cor:D} follows from Lemma~\ref{lem:transport} (functoriality of algebra objects under strong symmetric monoidal functors) and Proposition~\ref{prop:B}; the contractibility statement is the uniqueness clause of cospan initiality.

Proposition~\ref{prop:E} consists of citations: the presheaf $\infty$-topos identification is Lurie~\cite[Thm.~6.1.0.6]{LurieHTT}, the Heyting structure of subobjects is standard for $\infty$-topoi (see Lurie~\cite[\S 6.1.0]{LurieHTT}), and existence of fixed points is Knaster--Tarski~\cite{Tarski1955} applied to the resulting complete lattice. We assert no internal $\mu$-calculus completeness, model-theoretic expressive completeness, or higher logical result; the proposition records that the apparatus needed for first-order internal logic and monotone fixed-point semantics exists in $\cW$.

\subsection{Organization}
\label{ssec:organization}

Each of the four sections corresponds to a specific universal construction. We collect these here as a guide to which property is invoked where.

Section~\ref{sec:syntax} uses two universal properties of the free PROP construction. The first is that $\Syn(\delta)$ is initial among PROPs equipped with a chosen morphism $1\to 2$ (Proposition~\ref{prop:syn-delta-initial}); this fixes the syntactic stage. The second is the universal property of a kernel quotient: $\AncQ:=\Syn(\delta)/\equiv_\Pi$ is the initial PROP through which any strict symmetric monoidal functor identifying ancestry-equivalent morphisms factors (Proposition~\ref{prop:universal-factor}). The combination $\AncQ\simeq\Cocom\simeq\FinCorel^\circ$ of Theorem~\ref{thm:cocom} pins the resulting invariant to the non-counital cocommutative-comonoid structure.

Section~\ref{sec:gluing} uses the free hypergraph category theorem of Fong--Spivak~\cite[Thm.~3.14]{FongSpivak}: $\Cospan(\cB)$ is initial in the $2$-category of gluing hosts over $\cB$. This is the universal property of pushout-style gluing. The collapse identification $\Cospan(\FinSet)/\ker(\bar\Pi)\simeq\FinCorel\simeq\ESCFM$ (Theorem~\ref{thm:completeness}) is the universal property of the jointly-epic quotient (Lack~\cite[\S 5.4]{Lack2004}; Coya--Fong~\cite[Thm.~4.6]{CoyaFong2017}).

Section~\ref{sec:frobenius} uses functoriality of algebra objects under strong symmetric monoidal functors (Lemma~\ref{lem:transport}) together with the cospan initiality of Section~\ref{sec:gluing}. The contractibility statement is a uniqueness clause of the same initiality.

Section~\ref{sec:semantic} uses Lurie's free-cocompletion theorem~\cite[Thm.~5.1.5.6]{LurieHTT} and the identification of presheaf $\infty$-categories with $\infty$-topoi~\cite[Thm.~6.1.0.6]{LurieHTT}. The complete-Heyting-algebra and fixed-point structure follows by standard internal logic of $\infty$-topoi together with the Knaster--Tarski theorem.

\section{The Free PROP on a Single Generator and Its Ancestry Quotient}
\label{sec:syntax}

A PROP is a strict symmetric monoidal category with object monoid $(\mathbb{N}, +, 0)$; a morphism of PROPs is an identity-on-objects strict symmetric monoidal functor. We write morphisms $m \to n$ and denote the tensor product by $+$ (following the object addition). For standard references see Hackney--Robertson~\cite[\S 1.2]{HackneyRobertson} and Selinger~\cite{Selinger}.

\subsection{The free PROP on a single generator \texorpdfstring{$\delta: 1 \to 2$}{delta: 1 to 2}}
\label{ssec:free-prop}

\begin{definition}
\label{def:syn-delta}
The PROP $\Syn(\delta)$ has objects the natural numbers. A morphism $f: m \to n$ is an isomorphism class of finite directed acyclic graphs $G$ equipped with:
\begin{itemize}[leftmargin=2em]
\item $m$ linearly ordered input half-edges;
\item $n$ linearly ordered output half-edges;
\item a finite set $V(G)$ of internal vertices, each with one incoming half-edge and a \emph{linearly ordered} pair of outgoing half-edges (labelled left and right).
\end{itemize}
Two graphs represent the same morphism if and only if there is a bijection between their internal vertex sets inducing a graph isomorphism that preserves the linear orders on input and output half-edges and the incidence structure at each vertex (matching incoming half-edge to incoming half-edge and left/right outgoing half-edges in their linear order). Internal vertices carry no labels beyond the structural datum of having in-degree one and an ordered pair of outgoing half-edges, which corresponds to the single generator $\delta$. Composition is by gluing output half-edges to input half-edges in linear order; tensor product (written $+$) is disjoint union with concatenation of boundary orderings; the symmetry isomorphisms $m + n \to n + m$ are represented by crossings of wires. Crossings are graphical notation for the symmetry data of a symmetric monoidal category and are not vertices of $G$; in particular they do not contribute to the vertex set $V(G)$ and do not connect wires for the purposes of the ancestry functor (Definition~\ref{def:ancestry}).
\end{definition}

\begin{proposition}[Initiality]
\label{prop:syn-delta-initial}
$\Syn(\delta)$ is a PROP. It is initial among PROPs equipped with a distinguished morphism $1 \to 2$: for any PROP $\mathcal{P}$ with a chosen morphism $\delta': 1 \to 2$, there exists a unique PROP morphism $\Syn(\delta) \to \mathcal{P}$ sending $\delta$ to $\delta'$.
\end{proposition}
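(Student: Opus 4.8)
The plan is to treat $\Syn(\delta)$ as the free symmetric monoidal category on the one-generator signature $\{\delta:1\to 2\}$ presented graphically, so that both assertions reduce to the coherence theorem for symmetric monoidal categories. First I would verify that $\Syn(\delta)$ is a PROP. The object monoid is $(\mathbb{N},+,0)$ by construction. Composition by gluing output half-edges to input half-edges in linear order is well defined on isomorphism classes, since a pair of graph isomorphisms of the two composands restricts to a graph isomorphism of the glued graph; associativity of gluing is immediate because the glued vertex set is a disjoint union and incidence is determined boundary-wise. Identities are the vertex-free straight-wire graphs. The tensor $+$ is disjoint union with concatenated boundary orders, and its functoriality (the interchange law) holds because disjoint union commutes with gluing along disjoint boundaries. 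The symmetries $m+n\to n+m$ are the crossing graphs; naturality and the hexagon and coherence axioms hold because, per Definition~\ref{def:syn-delta}, crossings carry only the permutation datum and compose as permutations of wires. These checks are routine and I would present them compactly.

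For initiality, fix a PROP $\mathcal{P}$ with a chosen $\delta':1\to 2$. I would define the candidate $F:\Syn(\delta)\to\mathcal{P}$ to be the identity on objects and, on a morphism represented by a DAG $G$, to be the value of $G$ read as a string diagram in $\mathcal{P}$: each internal vertex interpreted as $\delta'$, each wire as an identity, each crossing as the symmetry of $\mathcal{P}$, vertical juxtaposition as $+$, and gluing as $\circ$. Concretely, I would choose a topological sort of $V(G)$ and slice $G$ into layers, each layer a tensor of one copy of $\delta'$ with identities, pre- and post-composed with the symmetries that route the wires, and set $F(G)$ to be the resulting composite in $\mathcal{P}$.

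The crux is well-definedness: I must show $F(G)$ is independent both of the chosen topological sort and slicing, and of the chosen representative of the isomorphism class. The first is exactly the soundness half of the graphical calculus --- two slicings of the same diagram are related by moves (interchange of layers containing non-adjacent vertices, sliding wires past crossings, naturality of the symmetry), each an equality valid in any symmetric monoidal category --- so I would invoke the coherence theorem for the graphical calculus (Joyal--Street; see Selinger~\cite{Selinger}) to conclude that the value depends only on the underlying labelled DAG up to isomorphism. Independence of representative is then immediate, since an isomorphism preserving the boundary orders and vertex incidence induces the same labelled diagram. This step is the main obstacle: everything else is bookkeeping, whereas here one must either cite the graphical coherence theorem or reprove the relevant invariance by induction on $|V(G)|$.

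Finally I would record that $F$ is strict symmetric monoidal and identity-on-objects (it preserves $+$, $\circ$, identities, and symmetries on the nose by construction) and sends $\delta\mapsto\delta'$. Uniqueness follows from a generation lemma: every morphism of $\Syn(\delta)$ is a composite, under $\circ$ and $+$, of copies of $\delta$, the identity $\id_1$, and the transposition $\sigma_{1,1}$ --- this is the layered decomposition above, now read inside $\Syn(\delta)$ itself. Any PROP morphism fixing $\delta\mapsto\delta'$ is forced on $\id_1$ and $\sigma_{1,1}$, being identity-on-objects and strict symmetric monoidal, and hence, by compatibility with $\circ$ and $+$, on every morphism; so it equals $F$.
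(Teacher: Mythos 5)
Your proposal is correct, but it is organized differently from the paper's proof. The paper proceeds by reduction: it first invokes the standard term-model construction of the free PROP on a signature (citing the monadicity result of Coya--Fong~\cite{CoyaFong2017} and the general construction in Hackney--Robertson~\cite{HackneyRobertson}), where the universal property is obtained by structural induction on terms, and then identifies the graph model of Definition~\ref{def:syn-delta} with that term model via the normal form for morphisms in free symmetric monoidal categories (Lack~\cite{Lack2004}, Selinger~\cite{Selinger}), transporting the universal property across the equivalence. You instead bypass the term model entirely: you verify the PROP axioms on graphs directly, construct the evaluation functor $F:\Syn(\delta)\to\mathcal{P}$ by topological sort and slicing, prove well-definedness by the Joyal--Street coherence theorem for the graphical calculus, and prove uniqueness by an explicit generation lemma ($\delta$, $\id_1$, $\sigma_{1,1}$ generate under $\circ$ and $+$). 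The two arguments hinge on the same technical ingredient --- coherence of the graphical language for symmetric monoidal categories, which is exactly what makes ``two slicings of one diagram'' and ``one graph, two terms'' the same issue --- but they distribute the work differently. The paper's route is shorter on the page because the existence half of the universal property and the generation argument are both inherited from the cited term model; yours is more self-contained and makes explicit the two points the paper leaves implicit inside its citations, namely that $\Syn(\delta)$ as defined graphically really is a PROP, and that uniqueness of the comparison functor is forced by generation rather than by induction on terms. Your identification of well-definedness under change of slicing as ``the main obstacle'' is accurate: it is precisely the point where the paper, too, must appeal to coherence, and neither proof is elementary at that step.
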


\begin{proof}
We sketch both the existence of the free PROP on a one-element signature with a generator $1 \to 2$ and the equivalence of the graph model with the standard term-model construction.

\textit{Term-model existence.} The free PROP on any signature $\Sigma$ of operations $m_i \to n_i$ exists and is obtained by the standard term construction: morphisms are equivalence classes of well-formed terms built from identities, symmetries, generators in $\Sigma$, compositions, and tensors, modulo the congruence generated by the strict monoidal and symmetric monoidal axioms. The universal property is immediate by structural induction on terms: any assignment of $\delta$ to a morphism $\delta': 1 \to 2$ in a target PROP $\mathcal{P}$ extends uniquely, by the universal property of free algebraic structures, to a strict symmetric monoidal functor on terms, which descends to the quotient since the defining relations are preserved by any such functor. This standard construction is recorded, for PROPs specifically, in Coya--Fong~\cite[Prop.~5.1]{CoyaFong2017} (monadicity of the underlying signature functor $U: \PROP \to \Set^{\mathbb{N}\times\mathbb{N}}$ and the ensuing explicit description of free PROPs as $\Sigma$-terms); for the free-PROP construction in greater generality (on megagraph data, allowing arbitrary color sets and source/target profiles), see Hackney--Robertson~\cite[Thm.~14, App.~A]{HackneyRobertson}.

\textit{Equivalence with the graph model.} The graph model of Definition~\ref{def:syn-delta} is equivalent to this term model via the standard normal form for morphisms in free symmetric monoidal categories generated by operations (Lack~\cite[\S 2.4]{Lack2004}; see also Selinger~\cite[\S 3]{Selinger} for the graphical calculus). Each graph $G$ determines a term up to the symmetric monoidal axioms, and each term determines a graph up to isomorphism; this correspondence preserves composition, tensor, and the universal property.
\end{proof}

\begin{remark}
\label{rem:term-model}
We shall not need the term-model presentation. All subsequent arguments proceed on graph isomorphism classes. We identify a morphism with its graph representative when this causes no confusion, writing $G(f)$ for the representing graph.
\end{remark}

\subsection{The PROP of finite corelations}
\label{ssec:fincorel}

\begin{definition}
\label{def:fincorel}
The PROP $\FinCorel$ has objects $\mathbb{N}$ and morphisms $m \to n$ given by equivalence relations on the finite set $\underm \sqcup \undern := \{1, \ldots, m\} \sqcup \{1, \ldots, n\}$. Composition of $R: m \to n$ and $S: n \to p$ is the restriction to $\underm \sqcup \underp$ of the transitive closure of $R \cup S$ on $\underm \sqcup \undern \sqcup \underp$. Tensor is disjoint union with reindexing by addition. The symmetry $m + n \to n + m$ is the transposition corelation.
\end{definition}

The composition law is well-defined and associative, and makes $\FinCorel$ into a PROP; see Coya--Fong~\cite[Def.~2.1, \S 4]{CoyaFong2017}. Under the identification of corelations $X \to Y$ with equivalence relations on $X + Y$ (equivalently, with jointly-epic cospans $X \to N \leftarrow Y$; Coya--Fong~\cite[Thm.~4.5]{CoyaFong2017}), $\FinCorel$ is a strict skeleton of the symmetric monoidal category of corelations between finite sets.

\subsection{The ancestry functor}
\label{ssec:ancestry}

We associate to each morphism of $\Syn(\delta)$ a corelation, defined graph-theoretically.

\begin{definition}[Ancestry partition]
\label{def:ancestry}
Let $f: m \to n$ be a morphism of $\Syn(\delta)$ represented by $G(f)$. Let $|G(f)|$ denote the underlying undirected graph of $G(f)$, with the input half-edges attached as pendant vertices labelled by $\underm$ and the output half-edges as pendant vertices labelled by $\undern$. The \emph{ancestry partition} of $f$ is the equivalence relation
\[
\ancest(f) \subseteq (\underm \sqcup \undern) \times (\underm \sqcup \undern)
\]
defined by: $x \sim y$ if and only if the pendant vertices labelled $x$ and $y$ lie in the same connected component of $|G(f)|$.
\end{definition}

\begin{proposition}[Graph-invariance of ancestry]
\label{prop:ancestry-invariance}
The relation $\ancest(f)$ depends only on the isomorphism class of $G(f)$, hence only on $f$. It is invariant under any symmetric monoidal coherence isomorphisms relating different presentations of $f$.
\end{proposition}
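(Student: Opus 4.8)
The plan is to split the claim into two reductions and route both through a single combinatorial fact: an isomorphism of the underlying labelled undirected graph carries connected components onto connected components while fixing the boundary labels, and therefore induces the identity on the partition of $\underm \sqcup \undern$ by co-membership in a component. First I would fix a precise description of $|G(f)|$: its vertex set is $V(G) \sqcup (\underm \sqcup \undern)$, the internal vertices together with the boundary pendants, and its edges are the wires obtained by gluing half-edges (each internal edge of $G$, and each boundary wire running from a pendant to its attachment). By the crossing convention of Definition~\ref{def:syn-delta}, a crossing contributes neither a vertex to $V(G)$ nor a junction to $|G(f)|$: two wires meeting at a crossing in the plane remain disjoint edges of $|G(f)|$. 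Thus $|G(f)|$ is a genuine combinatorial graph, independent of any planar drawing, and $\ancest(f)$ is exactly the partition of the pendant set into connected components of this graph.

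For graph-isomorphism invariance I would observe that the defining equivalence of Definition~\ref{def:syn-delta}---a bijection of internal vertex sets inducing a graph isomorphism that preserves the boundary orderings and the incidence at each vertex---is precisely the datum of an isomorphism $\phi\colon |G| \xrightarrow{\sim} |G'|$ of underlying labelled undirected graphs, and $\phi$ is the identity on boundary labels, since preserving the linear orders on input and output half-edges forces the pendant labelled $x$ to map to the pendant labelled $x$. Any incidence-preserving bijection of graphs maps each connected component bijectively onto a connected component, so pendants $x,y$ lie in the same component of $|G|$ if and only if they lie in the same component of $|G'|$. Hence the two partitions coincide and $\ancest$ descends to isomorphism classes, i.e.\ to morphisms of $\Syn(\delta)$.

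For invariance under symmetric monoidal coherence I would use that $\Syn(\delta)$ is strict, so associators and unitors are identities and the only nontrivial coherence isomorphisms are the symmetries, realized graphically by crossings. By the convention just fixed, a symmetry $m+n \to n+m$ is a family of straight-through wires implementing the transposition permutation: it adds no internal vertex and no junction, so pre- or post-composing a presentation of $f$ with it relabels the boundary pendants by the underlying permutation while preserving co-membership in components. Any two graph presentations of the same morphism $f$ differ by a sequence of such coherence rewrites, and---by the normal-form correspondence between the graph model and the term model (Proposition~\ref{prop:syn-delta-initial})---are therefore related by an isomorphism of the kind handled in the preceding paragraph; since the boundary labels of $f$ are fixed throughout, the induced relabelling is the identity and the two partitions agree. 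The main obstacle, and the only step demanding care, is the formalization of the crossing convention in the first paragraph: one must verify that declaring crossings to carry no combinatorial content makes $|G(f)|$ well defined independently of the planar embedding chosen to draw $f$. Once this is secured, both invariance claims are instances of the standard fact that connectivity is a graph-isomorphism invariant.
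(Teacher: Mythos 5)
Your proposal is correct and takes essentially the same approach as the paper: both arguments reduce everything to the single fact that connected components are preserved by graph isomorphisms fixing the labelled boundary pendants, and both dispose of the coherence clause by observing that coherence isomorphisms induce exactly such graph isomorphisms. The only cosmetic difference is that the paper cites Mac Lane's symmetric coherence theorem at that step, whereas you exploit strictness of $\Syn(\delta)$ (so the only coherence data are symmetries) together with the graph/term correspondence of Proposition~\ref{prop:syn-delta-initial}; the ``obstacle'' you flag about planar embeddings is in fact dissolved by Definition~\ref{def:syn-delta}, since morphisms there are already isomorphism classes of abstract combinatorial graphs, not drawings.
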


\begin{proof}
Connected components of an undirected graph are preserved under graph isomorphism. Any isomorphism preserving the ordered input and output pendants preserves components, hence preserves $\ancest$. By Mac Lane's symmetric coherence theorem~\cite[Ch.~XI]{MacLane}, any two symmetric monoidal presentations of $f$ in a free symmetric monoidal category differ by coherence isomorphisms; these induce graph isomorphisms by construction of the graph model.
\end{proof}

\begin{theorem}[The ancestry functor]
\label{thm:ancestry-functor}
The assignment $f \mapsto \ancest(f)$ extends to a strict symmetric monoidal functor
\[
\Pi: \Syn(\delta) \longrightarrow \FinCorel,
\]
which is the identity on objects.
\end{theorem}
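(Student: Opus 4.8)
The plan is to verify directly that $\Pi$, defined on objects by $m\mapsto m$ and on morphisms by $f\mapsto\ancest(f)$, satisfies the axioms of a strict symmetric monoidal, identity-on-objects functor. Well-definedness on isomorphism classes is already Proposition~\ref{prop:ancestry-invariance}, so what remains is to check preservation of identities, of the tensor, of the symmetries, and of composition; identity-on-objects is immediate, since both PROPs have object monoid $(\mathbb{N},+,0)$ and $\Pi$ fixes each $m$. Each of the first three is read off directly from the graph model of Definition~\ref{def:syn-delta}; only composition, which must be matched against the transitive-closure formula of Definition~\ref{def:fincorel}, requires real work.

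For the structural maps I would argue as follows. The identity $\id_m$ is represented by $m$ parallel wires with no internal vertices, so in $|G(\id_m)|$ the input pendant $i$ and output pendant $i$ share a component while boundary pendants of distinct indices do not; hence $\ancest(\id_m)$ is the identity corelation on $\underm\sqcup\underm$. The tensor $f+g$ is the disjoint union of the representing graphs with concatenated boundary orderings, so $|G(f+g)|=|G(f)|\sqcup|G(g)|$, whose components are those of the two summands taken separately; after reindexing the boundaries by addition this yields $\ancest(f+g)=\ancest(f)+\ancest(g)$, the tensor in $\FinCorel$. The symmetry $\sigma_{m,n}$ is represented by crossings, which by the convention fixed in Definition~\ref{def:syn-delta} are routing data rather than vertices and do not merge wires; each wire therefore still joins one input pendant to one output pendant under the block transposition, so $\ancest(\sigma_{m,n})$ is exactly the transposition corelation.

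The substance of the proof is composition. For $f:m\to n$ and $g:n\to p$, the gluing description makes $|G(g\circ f)|$ the graph obtained from $|G(f)|\sqcup|G(g)|$ by identifying, for each $i\in\{1,\ldots,n\}$, the output pendant $i$ of $f$ with the input pendant $i$ of $g$; write $c_i$ for the resulting glue vertex. Setting $R:=\ancest(f)$ and $S:=\ancest(g)$, the goal is to show that for boundary pendants $x,y\in\underm\sqcup\underp$, the vertices $x$ and $y$ lie in the same component of $|G(g\circ f)|$ if and only if there is a finite chain $x=z_0,z_1,\ldots,z_k=y$ whose interior vertices $z_1,\ldots,z_{k-1}$ all lie among the $c_i$ (that is, in $\undern$) and each of whose consecutive pairs $\{z_j,z_{j+1}\}$ is related by $R$ or by $S$. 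This chain condition is precisely the restriction to $\underm\sqcup\underp$ of the transitive closure of $R\cup S$ on $\underm\sqcup\undern\sqcup\underp$, which is the definition of $\Pi(g)\circ\Pi(f)$ in $\FinCorel$. I would prove the equivalence in two directions: forward, any path in $|G(g\circ f)|$ between boundary pendants can pass from the $f$-side to the $g$-side only through some glue vertex $c_i$, so cutting the path at its successive passages through glue vertices decomposes it into subpaths each lying entirely in $|G(f)|$ or in $|G(g)|$, giving the required $R$/$S$-chain; backward, each link of such a chain lifts to an actual path in the relevant subgraph by the definition of $R$ and $S$, and these paths concatenate at the shared glue vertices to connect $x$ and $y$ in $|G(g\circ f)|$.

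I expect the forward direction of the composition step to be the main obstacle: one must be certain that a connecting path cannot jump between the two sides except at a glue vertex, which is exactly where the hypotheses that gluing identifies only the matched boundary pendants, and that crossings do not themselves connect wires, are used. Conceptually this is the assertion that the connected-components functor $\pi_0$ carries the graph-level pushout defining composition in $\Syn(\delta)$ to the set-level pushout — transitive closure along the shared boundary — that defines composition in $\FinCorel$; one could shorten the argument by invoking preservation of that colimit by $\pi_0$, at the cost of self-containedness. With composition settled and the three structural compatibilities in hand, $\Pi$ assembles into a strict symmetric monoidal identity-on-objects functor, which is the assertion of the theorem.
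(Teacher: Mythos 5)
Your proposal is correct and follows essentially the same route as the paper's proof: direct verification of the identity, tensor, symmetry, and composition clauses on the graph model, with the composition step matched against the transitive-closure formula of Definition~\ref{def:fincorel}. Your two-directional path-cutting argument at the glue vertices is in fact slightly more detailed than the paper's, which simply asserts the chain characterization of connectivity in the glued graph.
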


\begin{proof}
\textbf{Identity.} The identity morphism $\id_n: n \to n$ has graph consisting of $n$ parallel arcs connecting input $i$ to output $i$, with no internal vertices. Its ancestry relation identifies each input with the corresponding output, which is the identity corelation at $n$.

\textbf{Tensor.} Given $f: m_1 \to n_1$ and $g: m_2 \to n_2$, the graph $G(f+g)$ is the disjoint union $G(f) \sqcup G(g)$. Connected components of a disjoint union are the disjoint union of components of each factor. Hence $\Pi(f+g) = \Pi(f) \sqcup \Pi(g)$, matching the tensor in $\FinCorel$.

\textbf{Composition.} Given $f: m \to n$ and $g: n \to p$, the graph $G(g \circ f)$ is obtained by gluing $G(f)$ to $G(g)$ along their shared boundary $\undern$. Let $H$ denote the glued graph with boundary $\underm \sqcup \underp$. Two boundary vertices $x, y \in \underm \sqcup \underp$ lie in the same connected component of $|H|$ if and only if there is a chain $x = z_0, z_1, \ldots, z_k = y$ in $\underm \sqcup \undern \sqcup \underp$ with each consecutive pair $z_i, z_{i+1}$ lying in a common component of either $|G(f)|$ or $|G(g)|$. This is precisely the description of the transitive closure of $\Pi(f) \cup \Pi(g)$ restricted to $\underm \sqcup \underp$, i.e., the composition in $\FinCorel$ (Definition~\ref{def:fincorel}). Therefore $\Pi(g \circ f) = \Pi(g) \circ \Pi(f)$.

\textbf{Symmetry.} The symmetry $\sigma_{m,n}: m + n \to n + m$ in $\Syn(\delta)$ is represented by the graph with $m+n$ arcs connecting each input $i$ in the first block ($1 \leq i \leq m$) to the output in position $m + i$ of the codomain block, and each input $j$ in the second block ($m + 1 \leq j \leq m + n$) to the output in position $j - m$ of the codomain block. No internal vertices are present. The induced ancestry relation on $(\underline{m+n}) \sqcup (\underline{n+m})$ identifies input $i$ with the appropriately permuted output; this is by definition the symmetry corelation $\sigma_{m,n}$ in $\FinCorel$.
\end{proof}

\begin{definition}[Ancestry congruence and quotient]
\label{def:ancestry-cong}
Let $\equiv_\Pi$ denote the kernel congruence of $\Pi$, i.e., $f \equiv_\Pi g$ iff $\Pi(f) = \Pi(g)$. We refer to $\equiv_\Pi$ as the \emph{ancestry congruence} on $\Syn(\delta)$. Define
\[
\AncQ := \Syn(\delta) / \equiv_\Pi.
\]
Let $q: \Syn(\delta) \to \AncQ$ denote the quotient projection.
\end{definition}

\begin{proposition}[Universal factorization; initiality of the ancestry quotient]
\label{prop:universal-factor}
$\AncQ$ is the initial object of the category whose objects are PROPs $\mathcal{D}$ equipped with a strict symmetric monoidal functor $F: \Syn(\delta) \to \mathcal{D}$ that identifies $\equiv_\Pi$-equivalent morphisms (i.e., $f \equiv_\Pi g \Rightarrow F(f) = F(g)$), and whose morphisms are PROP morphisms commuting with the structure functors. Equivalently: for any such $(F, \mathcal{D})$, there exists a unique PROP morphism $\bar F: \AncQ \to \mathcal{D}$ with $F = \bar F \circ q$. In particular, $\Pi$ factors uniquely as $\Pi = \bar\Pi \circ q$ with $\bar\Pi: \AncQ \to \FinCorel$ identity-on-objects and faithful; moreover, $\AncQ$ is, up to unique isomorphism, the coarsest PROP quotient of $\Syn(\delta)$ identifying ancestry-equivalent morphisms.
\end{proposition}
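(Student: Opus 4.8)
The plan is to treat this as the standard universal property of a quotient by a kernel congruence, specialized to the strict symmetric monoidal (PROP) setting. The first step is to record that $\equiv_\Pi$ is a symmetric monoidal congruence, so that $\AncQ$ is a genuine PROP and $q$ a PROP morphism. This is immediate from Theorem~\ref{thm:ancestry-functor}: the kernel of any functor is an equivalence relation on each hom-set closed under pre- and post-composition, and because $\Pi$ is \emph{strict symmetric monoidal} and identity-on-objects, $\equiv_\Pi$ is in addition compatible with the tensor (if $f\equiv_\Pi f'$ and $g\equiv_\Pi g'$ then $\Pi(f+g)=\Pi(f)+\Pi(g)=\Pi(f')+\Pi(g')=\Pi(f'+g')$, so $f+g\equiv_\Pi f'+g'$) and respects the symmetry data. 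Since the object monoid $(\mathbb{N},+,0)$ is left untouched, $\AncQ$ inherits a strict symmetric monoidal structure for which $q$ is identity-on-objects, full, and strict symmetric monoidal.

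For existence of $\bar F$, given a pair $(F,\mathcal{D})$ with $F$ identifying $\equiv_\Pi$-equivalent morphisms, I would define $\bar F$ to agree with $F$ on objects (both are identity-on-objects, so $\bar F=\id$ on $\mathbb{N}$) and set $\bar F([f]):=F(f)$ on morphisms, where $[f]=q(f)$ denotes the $\equiv_\Pi$-class. Well-definedness is exactly the hypothesis on $F$: if $[f]=[g]$ then $f\equiv_\Pi g$, whence $F(f)=F(g)$. Functoriality and preservation of tensor, unit, and symmetries for $\bar F$ each reduce to the corresponding property of $F$ by precomposing with the full strict symmetric monoidal functor $q$; for instance $\bar F([g]\circ[f])=\bar F([g\circ f])=F(g\circ f)=F(g)\circ F(f)=\bar F([g])\circ\bar F([f])$. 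Hence $\bar F$ is a PROP morphism with $F=\bar F\circ q$.

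Uniqueness follows because $q$ is full and surjective on each hom-set: any $\bar F'$ with $F=\bar F'\circ q$ must satisfy $\bar F'([f])=\bar F'(q(f))=F(f)$, so $\bar F'=\bar F$. This establishes the universal property, i.e.\ the initiality of $(\AncQ,q)$ in the stated category; being an initial object, it is unique up to unique isomorphism, which is the universal characterization recorded in the final clause. Applying the universal property to $F=\Pi$ — which collapses $\equiv_\Pi$ by the very definition of its kernel — yields the factorization $\Pi=\bar\Pi\circ q$. The functor $\bar\Pi$ is identity-on-objects because $\Pi$ and $q$ both are, and it is faithful by a one-line computation: $\bar\Pi([f])=\bar\Pi([g])$ iff $\Pi(f)=\Pi(g)$ iff $f\equiv_\Pi g$ iff $[f]=[g]$, so $\bar\Pi$ is injective on every hom-set.

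The only point demanding care, rather than a genuine obstacle, is the descent of structure in the first step: one must confirm that $+$ and the symmetries $\sigma_{m,n}$ are well-defined on $\equiv_\Pi$-classes and that the strict symmetric monoidal axioms survive in $\AncQ$. Because every datum in sight is strict and the congruence is realized as the kernel of a strict symmetric monoidal functor, there are no coherence subtleties — each axiom holds in $\AncQ$ precisely because it holds in $\Syn(\delta)$ and $q$ is full and identity-on-objects. I would therefore isolate this descent as a short preliminary lemma (a quotient of a PROP by a symmetric monoidal congruence is again a PROP, with full identity-on-objects projection) and then assemble the three formal steps above.
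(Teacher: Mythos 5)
Your proposal is correct and follows essentially the same route as the paper's own (much terser) proof: both rest on the standard universal property of kernel quotients, observing that $\equiv_\Pi$ is a PROP congruence because it is the kernel of the strict symmetric monoidal functor $\Pi$, defining $\bar F([f]) := F(f)$, and getting uniqueness and faithfulness of $\bar\Pi$ from surjectivity of $q$ on morphisms. Your write-up simply supplies the details (tensor compatibility, the descent lemma, the explicit faithfulness computation) that the paper compresses into ``standard,'' and your reading of the coarsest-quotient clause as a restatement of initiality matches the paper's.
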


\begin{proof}
Standard universal property of kernel quotients. Since $\equiv_\Pi$ is the kernel congruence of a strict symmetric monoidal functor, it is a PROP congruence (stable under composition, tensor, and symmetry). The quotient $q$ is therefore a PROP morphism, and $\bar F$ is well-defined on equivalence classes and unique by surjectivity of $q$ on morphisms. Faithfulness of $\bar\Pi$ is by construction. Coarseness (the ``initial'' clause) is the standard content of the coequalizer presentation of PROP congruences: any PROP congruence on $\Syn(\delta)$ coarser than $\equiv_\Pi$ would fail to be contained in $\ker(\Pi)$, contradicting the defining property.
\end{proof}

\subsection{Combinatorial characterization of the image}
\label{ssec:image}

We characterize the image of $\Pi$ as a sub-PROP of $\FinCorel$.

\begin{definition}
\label{def:fincorel-circ}
Let $\FinCorel^\circ \subseteq \FinCorel$ denote the sub-PROP whose morphisms $m \to n$ are those corelations $R$ satisfying:
\begin{itemize}[leftmargin=2em]
\item[(a)] each equivalence class of $R$ contains exactly one element of $\underm$;
\item[(b)] each equivalence class of $R$ contains at least one element of $\undern$.
\end{itemize}
\end{definition}

\begin{proposition}[Image lies in $\FinCorel^\circ$]
\label{prop:image-in-circ}
For every $f \in \Syn(\delta)(m, n)$, the corelation $\Pi(f)$ lies in $\FinCorel^\circ$.
\end{proposition}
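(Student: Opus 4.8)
The plan is to argue entirely on the representing DAG $G(f)$ and its underlying undirected graph $|G(f)|$ from Definition~\ref{def:ancestry}, verifying conditions (a) and (b) of Definition~\ref{def:fincorel-circ} one connected component at a time. Since the equivalence classes of $\ancest(f)$ are exactly the sets of boundary labels lying in a common connected component of $|G(f)|$, it suffices to show that every connected component $C$ contains exactly one input pendant and at least one output pendant. Write $a,b,k$ for the numbers of input pendants, output pendants, and internal vertices in $C$, so $|V(C)| = a+b+k$. The single generator $\delta:1\to 2$ fixes the local degree data in the directed structure of $G(f)$: each internal vertex has in-degree $1$ and out-degree $2$, each output pendant has in-degree $1$ and out-degree $0$, and each input pendant has in-degree $0$ and out-degree $1$; by Definition~\ref{def:syn-delta} the symmetry crossings are graphical data contributing no vertices or edges, so this accounts for all of $|G(f)|$.

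First I would establish (a). The key observation is that every edge of $G(f)$ is the unique incoming edge of its target, and the targets are precisely the non-input vertices (internal vertices and output pendants). Hence the edges of $C$ are in bijection with its non-input vertices, giving $|E(C)| = k+b = |V(C)| - a$. Since $C$ is connected, $|E(C)| \ge |V(C)| - 1$, so $a \le 1$. Conversely $a \ge 1$: starting from any vertex of $C$ and repeatedly following the unique incoming edge backwards terminates, by finiteness and acyclicity, at an in-degree-$0$ vertex, which can only be an input pendant, and that pendant lies in $C$. Therefore $a = 1$, and the equality $|E(C)| = |V(C)| - 1$ shows moreover that $C$ is a tree rooted at its unique input. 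In particular every component meets the boundary, so the equivalence classes of $\ancest(f)$ are in bijection with the components and condition (a) holds.

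For (b) I would run a degree-sum count on the tree $C$. The total out-degree equals $|E(C)| = |V(C)| - 1 = b+k$, while summing out-degrees over vertex types gives $1\cdot 1 + 2\cdot k + 0\cdot b = 1 + 2k$. Equating yields $b = 1+k \ge 1$, which is condition (b). Equivalently, balancing the out-degree-$1$ sources against the in-degree-$1$ sinks within $C$ gives $a + 2k = b + k$, hence $b = a + k = 1 + k$.

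The main obstacle is not the counting, which is elementary, but pinning down the graph-model bookkeeping so that the degree data above is exactly correct. One must confirm that in a morphism of $\Syn(\delta)$ every internal incoming half-edge and every output half-edge is matched to exactly one source end (no dangling internal ports), that every input pendant and every internal outgoing half-edge is the source of exactly one edge, and — as stipulated in Definition~\ref{def:syn-delta} — that symmetry crossings contribute nothing to $V(G)$ or to connectivity. Once these conventions are made precise, boundary-free components are excluded directly (a purely internal component would force $2k = k$, hence $k=0$ and an empty component), and the two degree counts above complete the argument.
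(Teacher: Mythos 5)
Your proof is correct, and it follows the same component-by-component strategy as the paper, but it carries out the two key steps by a different technique. The paper's argument is structural: since every internal vertex has in-degree one, each connected component of $|G(f)|$ is a finite rooted tree whose root is an input pendant; existence of the input is deduced from the in-degree-one condition (a component with no input would leave some incoming half-edge without a source), and condition (b) is obtained by following outgoing edges forward from the root, which must terminate at an output pendant by acyclicity and finiteness. You replace both steps with counting: the bijection between edges and non-input vertices gives $|E(C)| = |V(C)| - a$, which against connectivity ($|E(C)| \ge |V(C)| - 1$) forces $a \le 1$; the backward walk along unique incoming edges gives $a \ge 1$; and the out-degree sum $1 + 2k = |E(C)| = b + k$ gives $b = k+1 \ge 1$. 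What your route buys is precision at exactly the point where the paper is terse: the uniqueness half of (a) follows from an explicit Euler-type inequality rather than from the asserted (but not separately argued) tree structure of components, and you obtain the sharper quantitative fact $b = k+1$, the expected leaf count of a binary tree with $k$ internal nodes. You also make explicit the well-formedness bookkeeping (every in-port filled exactly once, crossings contributing no vertices or edges) on which the degree data rests. What the paper's route buys is brevity and a direct geometric picture. Both arguments rest on the same three ingredients --- the local degree data of $\delta$, acyclicity, and finiteness --- so the difference is one of presentation rather than substance; either proof would serve.
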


\begin{proof}
Every internal vertex of $G(f)$ has in-degree one and an ordered pair of outgoing half-edges. Following edge orientations from inputs through internal vertices to outputs, each connected component of $|G(f)|$ is a finite rooted tree whose root is an input pendant and whose leaves include at least one output pendant. (If a component had no output leaves, following outgoing edges from the input root would either form a cycle, contradicting acyclicity, or fail to terminate, contradicting finiteness. If a component had no input pendant, an internal vertex with no source for its incoming half-edge would violate the in-degree-one condition.) Both (a) and (b) follow.
\end{proof}

\begin{proposition}[$\FinCorel^\circ$ is closed under composition and tensor]
\label{prop:circ-closed}
The subset $\FinCorel^\circ \subseteq \FinCorel$ is closed under composition and tensor and contains all identities and symmetries. Hence $\FinCorel^\circ$ is a sub-PROP.
\end{proposition}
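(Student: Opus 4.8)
The plan is to dispatch the identities, symmetries, and tensor by direct inspection, and to concentrate the real work on composition, which I would handle through a reformulation of $\FinCorel^\circ$ in terms of surjections.

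First I would record the easy cases. The identity corelation at $n$ has classes each pairing the $i$-th input with the $i$-th output, and the symmetry $\sigma_{m,n}$ pairs each input with its permuted output; in both, every class meets $\underm$ in exactly one element and $\undern$ in exactly one element, so (a) and (b) hold. For the tensor of $R\in\FinCorel^\circ(m_1,n_1)$ and $S\in\FinCorel^\circ(m_2,n_2)$, the reindexed disjoint union has as its classes precisely the classes of $R$ together with the classes of $S$, since after reindexing no class of one summand can meet the boundary of the other; hence (a) and (b) are inherited summand-wise, and identities and symmetries clearly belong to $\FinCorel^\circ$.

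The substantive step is composition. Here I would first observe the reformulation: a corelation $R\in\FinCorel^\circ(m,n)$ is exactly the data of a surjection $r\colon \undern\to\underm$, where $r(j)$ is the unique input lying in the $R$-class of the output $j$. Condition (a) makes $r$ a well-defined total function (each output sits in a class with exactly one input), and condition (b) makes $r$ surjective (each input-class contains an output); conversely each surjection determines a corelation via the classes $\{i\}\sqcup r^{-1}(i)$. Under this dictionary I claim that composing corelations corresponds to composing the associated surjections, and since a composite of surjections is again a surjection, closure under composition follows. To make the dictionary compatible with the corelation composition law, I would verify the claim by an invariant argument on the glued set $\underm\sqcup\undern\sqcup\underp$. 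Given $R\leftrightarrow r$ and $S\leftrightarrow s$, define a root map $\rho$ to $\underm$ by $\rho|_{\underm}=\id$, $\rho(j)=r(j)$ on $\undern$, and $\rho(k)=r(s(k))$ on $\underp$. The key lemma is that $\rho$ is constant on the transitive closure $\sim$ of $R\cup S$: it suffices to check constancy across a single $R$-relation and across a single $S$-relation, each of which is immediate from the definitions of $r$ and $s$ (using that an $S$-class has a unique $\undern$-element).

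This lemma is exactly the point that resolves the main obstacle, namely condition (a) for the composite. If a $\sim$-class contained two inputs $i\neq i'$, constancy of $\rho$ would force $i=\rho(i)=\rho(i')=i'$, a contradiction; and every $\sim$-class contains at least one input, because tracing $\underp\xrightarrow{s}\undern\xrightarrow{r}\underm$ shows each element is $\sim$-related to some input. Restricting to $\underm\sqcup\underp$ then yields exactly one input per class. For condition (b) I would chain the two nonemptiness hypotheses: the class of an input $i$ contains some output $j\in\undern$ by condition (b) for $R$, and the $S$-class of $j$ contains some $k\in\underp$ by condition (b) for $S$, so $i\sim j\sim k$ places a $\underp$-element in each composite class. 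Together these show the composite lies in $\FinCorel^\circ$, and with the tensor, identities, and symmetries already handled, that $\FinCorel^\circ$ is a sub-PROP. The only delicate point is the ``exactly one input'' clause, which is precisely where the invariant $\rho$ -- equivalently, the functionality of the surjections $r$ and $s$ -- does the work; everything else is bookkeeping.
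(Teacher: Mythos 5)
Your proof is correct, and your treatment of the easy cases (identities, symmetries, tensor) coincides with the paper's; but on the substantive step --- condition (a) for the composite --- you take a genuinely different route. The paper argues by chain reduction: if two distinct inputs lay in one class of $T = S \circ R$, a connecting chain in $\underm \sqcup \undern \sqcup \underp$ would have to traverse $S$-classes, and since each $S$-class contains a unique $\undern$-element, the chain enters and leaves any such class through the same vertex, so it collapses to a chain inside $R$, contradicting (a) for $R$. You instead encode a corelation in $\FinCorel^\circ$ as a surjection $r\colon \undern \to \underm$ --- the dictionary the paper only introduces later, in Remark~\ref{rem:cocom-comb}, and uses for the faithfulness step of Theorem~\ref{thm:cocom} --- and prove that the root map $\rho$ (identity on $\underm$, $r$ on $\undern$, $r \circ s$ on $\underp$) is constant on classes of the transitive closure of $R \cup S$. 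This buys three things. First, it replaces the paper's somewhat informal reduction (``traversing an $S$-class is effectively trivial'') with a mechanical two-case invariant check. Second, it proves the existence half of (a) --- every composite class contains at least one input, by tracing $k \mapsto s(k) \mapsto r(s(k))$ --- which the paper's proof of (a) never establishes (it only shows ``at most one'') even though its proof of (b) then opens with ``By the above, $c$ contains a unique input $i$''; your version closes that small implicit gap. Third, your lemma actually shows that composition in $\FinCorel^\circ$ corresponds to composition of the associated surjections ($T \leftrightarrow r \circ s$), a stronger statement that directly anticipates the identification $\FinCorel^\circ \simeq \Cocom$. Your argument for (b) --- chaining nonemptiness of $R$ and of $S$ through the unique $\undern$-element of an $S$-class --- is essentially identical to the paper's.
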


\begin{proof}
\textbf{Identities and symmetries.} The identity at $n$ has classes $\{(i, i')\}$ with $i \in \underm = \undern$ and $i' \in \undern$ its matching output; each class has exactly one input and one output. Transposition corelations have the same shape. Conditions (a) and (b) hold.

\textbf{Tensor.} Disjoint union preserves both conditions on each factor, hence on the union.

\textbf{Composition.} Let $R: m \to n$ and $S: n \to p$ both satisfy (a) and (b); let $T := S \circ R$. We show $T$ satisfies (a) and (b).

For (a): suppose a $T$-class $c$ contains two distinct inputs $i_1, i_2 \in \underm$. There is then a chain in $\underm \sqcup \undern \sqcup \underp$ connecting $i_1$ to $i_2$, with each consecutive pair in an $R$-class or an $S$-class. Since $i_1$ and $i_2$ are inputs of $R$, they lie in distinct $R$-classes (by (a) for $R$). The chain must therefore pass through at least one $S$-class. But every $S$-class contains exactly one $\undern$-element (by (a) for $S$, applied with $\undern$ as the input side of $S$). Therefore any $S$-class traversed by the chain contains exactly one $\undern$-vertex, and the chain enters and exits that $S$-class through the same $\undern$-vertex; traversing an $S$-class is effectively trivial. The chain thus reduces to a chain wholly within $R$, contradicting that $i_1$ and $i_2$ lie in different $R$-classes.

For (b): let $c$ be a $T$-class. By the above, $c$ contains a unique input $i \in \underm$. By (b) for $R$, the $R$-class of $i$ contains some $\undern$-element $\eta$. Consider the $S$-class $[\eta]_S$. By (a) for $S$, this class has $\eta$ as its unique $\undern$-element; by (b) for $S$, $[\eta]_S$ contains at least one $\underp$-element. That $\underp$-element lies in the same $T$-class as $i$, so $c$ contains an output.
\end{proof}

\begin{proposition}[Surjectivity onto $\FinCorel^\circ$]
\label{prop:surjectivity}
Every corelation $R \in \FinCorel^\circ(m, n)$ lies in the image of $\Pi$: there exists $f \in \Syn(\delta)(m, n)$ with $\Pi(f) = R$.
\end{proposition}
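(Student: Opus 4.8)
The plan is to realize $R$ explicitly as a disjoint union of trees, one per equivalence class, post-composed with a symmetry that places the output leaves in their prescribed positions. First I would use conditions (a) and (b) to decompose $R$: since each class contains exactly one element of $\underm$, the classes are in bijection with $\underm$; writing $C_i$ for the class containing input $i$ and $S_i := C_i \cap \undern$ for its output part, condition (b) gives $k_i := |S_i| \geq 1$, and the $S_i$ partition $\undern$, so that $\sum_{i=1}^m k_i = n$.

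The key building block is a ``copy tree'' realizing the single-block corelation $1 \to k$ for each $k \geq 1$. I would construct $\tau_k \in \Syn(\delta)(1,k)$ by induction: set $\tau_1 := \id_1$ and $\tau_{k+1} := (\id_{k-1} + \delta)\circ \tau_k$, attaching a fresh copy of $\delta$ to the last output at each stage. The graph $G(\tau_k)$ is a connected rooted tree (one input root, $k$ output leaves, and internal $\delta$-vertices), so by Definition~\ref{def:ancestry} its ancestry partition $\Pi(\tau_k)$ is the total corelation identifying the single input with all $k$ outputs.

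Next I would assemble $g := \tau_{k_1} + \cdots + \tau_{k_m}\colon m \to n$. By the tensor clause of Theorem~\ref{thm:ancestry-functor}, $\Pi(g)$ is the disjoint union of these single-block corelations, i.e. the ``block partition'' $B$ in which input $i$ is grouped with the contiguous output block $\{p_{i-1}+1, \ldots, p_i\}$, where $p_i := k_1 + \cdots + k_i$ and $p_0 := 0$. It remains to move each block onto the prescribed set $S_i$. I would fix a bijection $\pi$ of $\undern$ carrying the $i$-th block onto $S_i$ (for definiteness the order-preserving one on each block), let $\sigma_\pi \in \Syn(\delta)(n,n)$ be the corresponding permutation of wires, and set $f := \sigma_\pi \circ g$. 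By the composition and symmetry clauses of Theorem~\ref{thm:ancestry-functor}, $\Pi(f) = \Pi(\sigma_\pi)\circ \Pi(g)$; composing $B$ with the permutation corelation $\Pi(\sigma_\pi)$ relabels each output $j$ in block $i$ by $\pi(j) \in S_i$, so that the class of input $i$ in $\Pi(f)$ becomes exactly $\{i\}\sqcup S_i = C_i$. Hence $\Pi(f) = R$.

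I expect the only genuine obstacle to be the bookkeeping in this final step: verifying that post-composition in $\FinCorel$ with a permutation corelation acts precisely as a relabeling of the output boundary, so that $B$ is carried to $R$ and not merely to some coarser relation. This reduces to the composition law of Definition~\ref{def:fincorel} (transitive closure on $\underm \sqcup \undern \sqcup \undern$ restricted to $\underm \sqcup \undern$) applied to a situation in which each class of $\sigma_\pi$ meets the shared boundary in a single element; the transitive closure then collapses to the claimed relabeling, with no spurious mergers. Everything else is the routine inductive construction of the copy trees together with the functoriality already established in Theorem~\ref{thm:ancestry-functor}.
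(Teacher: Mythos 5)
Your proposal is correct and follows essentially the same route as the paper's proof: decompose $R$ into one class per input via condition (a), realize each class by a comb-shaped tree of $\delta$-vertices (your inductive $\tau_k$ is the paper's left-comb $T_i$), tensor these together, and fix the output positions with a symmetry, concluding by functoriality of $\Pi$. The only difference is that you spell out the final permutation step (checking that composing with a permutation corelation is a pure relabeling with no spurious mergers), which the paper leaves implicit.
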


\begin{proof}
Since each equivalence class of $R$ has exactly one input (by (a)), define $\varphi: \undern \to \underm$ sending each output to the unique input in its $R$-class. For each $i \in \underm$, let $k_i := |\varphi^{-1}(i)|$, which is $\geq 1$ by (b).

For each $i$, construct a graph $T_i$ with one input pendant, $k_i$ output pendants, and $k_i - 1$ internal vertices, each a $\delta$-vertex with in-degree one and out-degree two, arranged as a left-comb binary tree. (When $k_i = 1$, $T_i$ is the identity arc with no vertices.) Let $f := T_1 + T_2 + \cdots + T_m$ (disjoint union), with outputs reindexed by the bijection $\varphi^{-1}(i) \to \mathrm{leaves}(T_i)$ and then permuted into the order $1, \ldots, n$ via an appropriate symmetry in $\Syn(\delta)$. Each $T_i$ contributes one connected component containing input $i$ and outputs $\varphi^{-1}(i)$; hence $\Pi(f) = R$.
\end{proof}

\subsection{Identification of \texorpdfstring{$\AncQ$}{AncQ} with \texorpdfstring{$\Cocom$}{Cocom}}
\label{ssec:cocom}

\begin{definition}
\label{def:cocom}
Let $\Cocom$ denote the PROP freely generated by a morphism $\delta: 1 \to 2$ subject to:
\begin{itemize}[leftmargin=2em]
\item \textbf{(coAssoc)} $(\delta + \id_1) \circ \delta = (\id_1 + \delta) \circ \delta$;
\item \textbf{(coComm)} $\sigma_{1,1} \circ \delta = \delta$, where $\sigma_{1,1}: 2 \to 2$ is the swap symmetry.
\end{itemize}
No counit $\epsilon: 1 \to 0$, unit $\eta: 0 \to 1$, or multiplication $\mu: 2 \to 1$ is imposed.
\end{definition}

\begin{remark}[Combinatorial description of $\Cocom$]
\label{rem:cocom-comb}
$\Cocom$ admits an explicit combinatorial description. Its morphisms $m \to n$ are in bijection with functions $\varphi: \undern \to \underm$ such that every fiber $\varphi^{-1}(i)$ is nonempty; composition corresponds to function composition of such ``surjection-like'' functions, and tensor to disjoint union with reindexing.

This description can be obtained as follows. Pirashvili~\cite[\S 2, Ex.~3]{Pirashvili2002} introduces the subPROP $\Omega \subseteq \mathsf{F}$ of finite sets whose morphisms are surjections, and observes that $\Omega$-algebras are (non-unital) commutative associative algebras, whereas $\mathsf{F}$-algebras are unital commutative associative algebras (Pirashvili~\cite[\S 2, Ex.~1]{Pirashvili2002}, citing Grandis~\cite{Grandis2001}). Passing to opposite PROPs, $\Omega\opp$ is the PROP freely generated by a cocommutative coassociative comultiplication \emph{without counit}, and its morphisms $m \to n$ are surjections $\undern \twoheadrightarrow \underm$, i.e., functions $\undern \to \underm$ with every fiber nonempty --- which is precisely our $\Cocom$.

The same structure can be derived via Lack's distributive-law framework: the factorization $\mathsf{F} = \mathsf{F}_m \otimes_{\mathsf{P}} \mathsf{F}_e$ of Lack~\cite[\S 5.1]{Lack2004}, where $\mathsf{F}_e = \Omega$ consists of surjections and $\mathsf{F}_m$ consists of injections (pointing/unit structure), dualizes to $\mathsf{F}\opp = \mathsf{F}_e\opp \otimes_{\mathsf{P}} \mathsf{F}_m\opp = \Omega\opp \otimes_{\mathsf{P}} \mathsf{F}_m\opp$, separating the non-counital cocommutative comonoid part ($\Omega\opp$) from the counit-bearing part ($\mathsf{F}_m\opp$). Dropping the counit leaves $\Omega\opp \simeq \Cocom$. Pirashvili's Theorem~5.2 gives the corresponding identification for bialgebras via the Q-construction on $\mathsf{F}(\mathrm{as})$; the present identification is the cocommutative-comonoid fragment of that framework.
\end{remark}

\begin{theorem}[Main theorem of Section~\ref{sec:syntax}]
\label{thm:cocom}
The comparison functor $\bar\Pi: \AncQ \to \FinCorel$ is fully faithful with image $\FinCorel^\circ$. Moreover, there is a canonical isomorphism of PROPs
\[
\Phi: \Cocom \xrightarrow{\ \simeq\ } \AncQ,
\]
identity on objects and sending the generator $\delta$ of $\Cocom$ to the class $[\delta] \in \AncQ(1, 2)$. In particular,
\[
\AncQ \simeq \Cocom \simeq \FinCorel^\circ.
\]
\end{theorem}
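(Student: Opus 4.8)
The plan is to prove the theorem in three movements: first identify $\bar\Pi$ as an isomorphism onto $\FinCorel^\circ$, then produce $\Phi$ by a universal property, and finally show $\Phi$ is invertible by a cardinality argument routed through $\FinCorel^\circ$.

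First I would assemble the statement about $\bar\Pi$ from the results already in hand. Faithfulness of $\bar\Pi$ is immediate from Proposition~\ref{prop:universal-factor}, where it is obtained by construction of the kernel quotient. Since $\Pi = \bar\Pi \circ q$ with $q$ surjective on morphisms, the image of $\bar\Pi$ coincides with the image of $\Pi$, which is contained in $\FinCorel^\circ$ by Proposition~\ref{prop:image-in-circ} and exhausts $\FinCorel^\circ$ by Proposition~\ref{prop:surjectivity}. Thus for every $m,n$ the map $\bar\Pi\colon \AncQ(m,n) \to \FinCorel^\circ(m,n)$ is a bijection, and being identity on objects, $\bar\Pi$ is an isomorphism of PROPs onto the sub-PROP $\FinCorel^\circ$ (Proposition~\ref{prop:circ-closed}). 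This settles the first assertion.

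Next I would construct $\Phi$ via the universal property of $\Cocom$ (Definition~\ref{def:cocom}). It suffices to exhibit a morphism $1 \to 2$ in $\AncQ$ satisfying (coAssoc) and (coComm); I take $[\delta]$. To verify the two relations I would compute ancestry partitions, using that $\bar\Pi$ is faithful so that equality in $\AncQ$ may be checked after applying $\Pi$. Both $(\delta + \id_1)\circ\delta$ and $(\id_1 + \delta)\circ\delta$ have graphs that are binary trees $1 \to 3$ with a single connected component, so $\Pi$ sends both to the corelation joining the input with all three outputs; hence (coAssoc) holds in $\AncQ$. Likewise $\sigma_{1,1}\circ\delta$ and $\delta$ have the same one-component ancestry, giving (coComm). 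By freeness of $\Cocom$ there is then a unique identity-on-objects PROP morphism $\Phi\colon \Cocom \to \AncQ$ with $\Phi(\delta) = [\delta]$.

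Finally I would prove $\Phi$ is an isomorphism by analysing $\Psi := \bar\Pi \circ \Phi\colon \Cocom \to \FinCorel^\circ$. Because $\Syn(\delta)$, hence $\AncQ$, is generated as a PROP by $[\delta] = \Phi(\delta)$ together with identities and symmetries, $\Phi$ is surjective on every hom-set; composing with the isomorphism $\bar\Pi$, so is $\Psi$. On the other side, a corelation in $\FinCorel^\circ(m,n)$ is determined by sending each output to the unique input in its class, which by conditions (a) and (b) yields exactly a function $\varphi\colon \undern \to \underm$ with all fibers nonempty; this gives a bijection of $\FinCorel^\circ(m,n)$ with the finite set $\{\varphi\colon \undern \to \underm \mid \text{all fibers nonempty}\}$. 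By the combinatorial description of Remark~\ref{rem:cocom-comb}, $\Cocom(m,n)$ is in bijection with the same finite set, so the two hom-sets have equal finite cardinality. A surjection between finite sets of equal cardinality is a bijection, so $\Psi$ is bijective on hom-sets and therefore an isomorphism of PROPs; since $\bar\Pi$ is an isomorphism, $\Phi = \bar\Pi^{-1}\circ\Psi$ is one as well. Composing the two isomorphisms gives $\Cocom \simeq \AncQ \simeq \FinCorel^\circ$, sending $\delta$ to $[\delta]$ as required.

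The step I expect to be the main obstacle is the injectivity of $\Phi$ --- equivalently the faithfulness of $\Psi$ --- since this is precisely the completeness claim that coAssoc and coComm generate all ancestry identifications, with no hidden relations. I route this through the cardinality count rather than a direct term rewriting, but the substantive input is the normal form for $\Cocom$ supplied by Remark~\ref{rem:cocom-comb} (Pirashvili; Lack): that every morphism of $\Cocom$ reduces under the two relations to a canonical form indexed by its underlying surjection-like function. The accompanying verification that the $\FinCorel^\circ$-correspondence is functorial --- that composition of corelations in $\FinCorel^\circ$ matches composition of the associated functions --- is the computational heart, and is essentially the composition analysis already carried out in Proposition~\ref{prop:circ-closed}; one checks that $\Psi$ intertwines the two function-assignments by evaluating on the generator $\delta$ (the constant map $\{1,2\}\to\{1\}$) and invoking functoriality.
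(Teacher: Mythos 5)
Your proposal is correct, and its skeleton matches the paper's: the first assertion is assembled from Propositions~\ref{prop:universal-factor}, \ref{prop:image-in-circ} and~\ref{prop:surjectivity} exactly as in the paper; $\Phi$ is obtained from freeness of $\Cocom$ after the same two ancestry computations for (coAssoc) and (coComm); and surjectivity of $\Phi$ on hom-sets is the same generation argument. The genuine divergence is in the crux, the faithfulness of $\Phi$. The paper proves it directly: by Remark~\ref{rem:cocom-comb} a morphism of $\Cocom$ is classified by a function $\varphi\colon\undern\to\underm$ with nonempty fibers, the ancestry of the morphism so classified is the partition with classes $\{i\}\cup\varphi^{-1}(i)$, and distinct functions give distinct partitions. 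You instead count: $\Psi=\bar\Pi\circ\Phi$ is surjective on hom-sets, and $\Cocom(m,n)$ and $\FinCorel^\circ(m,n)$ are finite of equal cardinality (each in bijection with the nonempty-fibered functions), so $\Psi$ is bijective by pigeonhole. This buys something real: you never need the compatibility statement that the paper asserts without separate proof --- that the ancestry of the morphism classified by $\varphi$ is exactly the partition $\{i\}\cup\varphi^{-1}(i)$ --- and in fact your route needs only the ``soundness'' half of the normal-form theorem (every $\Cocom$-morphism reduces to a form indexed by some $\varphi$, bounding $|\Cocom(m,n)|$ above by the number of such functions); injectivity of the classification then falls out of the counting. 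The price is explicitness (your argument never identifies what $\Psi$ does to a given morphism) and a reliance on finiteness of hom-sets, harmless here. One correction to your closing paragraph: the intertwining check you call ``the computational heart'' --- that $\Psi$ matches the two function-assignments --- is precisely what your cardinality routing renders unnecessary; if you actually needed it, you would be back to the paper's direct argument. Keep either the counting or the intertwining, but you do not need both.
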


\begin{proof}
\textbf{Full faithfulness of $\bar\Pi$ onto $\FinCorel^\circ$.} Faithfulness is by construction (Proposition~\ref{prop:universal-factor}). Fullness onto $\FinCorel^\circ$ combines Propositions~\ref{prop:image-in-circ} and~\ref{prop:surjectivity}.

\textbf{$\Phi$ is well-defined.} We verify (coAssoc) and (coComm) in $\AncQ$. The ancestry of $\delta \in \Syn(\delta)(1, 2)$ is the partition with one class $\{\mathrm{input}, \mathrm{output}_1, \mathrm{output}_2\}$. The ancestry of $\sigma_{1,1} \circ \delta$ is the same (swapping output labels preserves the single class). Hence $\Pi(\delta) = \Pi(\sigma_{1,1} \circ \delta)$, so $[\delta] = [\sigma_{1,1} \circ \delta]$ in $\AncQ$. Similarly, $(\delta + \id) \circ \delta$ and $(\id + \delta) \circ \delta$ both have graph a full binary tree with one input and three outputs, yielding the same single-class ancestry relation; their classes agree in $\AncQ$.

\textbf{$\Phi$ is essentially surjective and full.} Every morphism of $\AncQ$ is the class of an iterated composite and tensor of $\delta$, identities, and symmetries, hence the image of some $\Cocom$-morphism under $\Phi$.

\textbf{$\Phi$ is faithful.} Suppose $\Phi([f]) = \Phi([g])$ in $\AncQ$ for $[f], [g] \in \Cocom(m, n)$. Then $\Pi(f) = \Pi(g)$ in $\FinCorel$. By Remark~\ref{rem:cocom-comb}, $\Cocom$-morphisms $m \to n$ are classified by functions $\varphi: \undern \to \underm$ with nonempty fibers, and the ancestry of such a morphism is the partition of $\underm \sqcup \undern$ into classes $\{i\} \cup \varphi^{-1}(i)$. Two such functions yield the same ancestry relation if and only if they are equal. Hence $[f] = [g]$ in $\Cocom$.

\textbf{Conclusion.} $\Phi$ is an identity-on-objects, fully faithful, essentially surjective strict symmetric monoidal functor, hence an isomorphism of PROPs.
\end{proof}

\begin{remark}[Scope]
\label{rem:section2-scope}
Theorem~\ref{thm:cocom} does \emph{not} assert that $\AncQ$ is equivalent to the full PROP $\FinCorel$. $\FinCorel^\circ$ is a \emph{proper} sub-PROP of $\FinCorel$. The missing corelations are precisely those requiring a counit (classes with one input and no outputs), a unit (output-only classes, i.e., classes with no input), or a multiplication (classes with two or more inputs). These generators appear only after passage to the cospan realization of Section~\ref{sec:gluing}, where pushout composition and finite-coproduct universality supply them. The identification with $\FinCorel \simeq \ESCFM$ is a theorem about the cospan realization followed by jointly-epic corestriction (Theorem~\ref{thm:completeness}), not about the primitive syntactic calculus.
\end{remark}

\section{The Cospan Realization}
\label{sec:gluing}

Section~\ref{sec:syntax} identified the image of the ancestry functor with a proper sub-PROP of $\FinCorel$. The missing generators (counit, unit, multiplication) appear at the cospan-realization level, where pushout composition supplies them.

\subsection{The base category and cospan category}
\label{ssec:cospan-def}

\begin{definition}
\label{def:cospan}
Let $\cB$ be a category with finite colimits. The cospan category $\Cospan(\cB)$ has:
\begin{itemize}[leftmargin=2em]
\item \textbf{Objects:} the objects of $\cB$.
\item \textbf{Morphisms $X \to Y$:} isomorphism classes of cospans $X \xrightarrow{a} N \xleftarrow{b} Y$ in $\cB$, where two cospans are isomorphic if there is an isomorphism of apex objects commuting with the legs.
\item \textbf{Composition:} given $[X \to N \leftarrow Y]$ and $[Y \to N' \leftarrow Z]$, compose by forming the pushout of $N \leftarrow Y \to N'$ in $\cB$; the composite is $[X \to N +_Y N' \leftarrow Z]$.
\item \textbf{Identity at $X$:} the cospan $[X \xrightarrow{\id} X \xleftarrow{\id} X]$.
\item \textbf{Symmetric monoidal structure:} tensor is given on objects by the finite coproduct in $\cB$, and on morphisms componentwise using coproduct of cospans. The unit is the initial object $\emptyset \in \cB$.
\end{itemize}
\end{definition}

The verification that $\Cospan(\cB)$ is well-defined and symmetric monoidal under pushout composition is classical; see Rosebrugh--Sabadini--Walters~\cite{RSW2005} and Fong~\cite[Ex.~2.3]{FongCospans}. For the case $\cB = \FinSet$, Coya--Fong~\cite[Ex.~4.4, Thm.~4.5]{CoyaFong2017} record the skeletal choice that makes $\Cospan(\FinSet)$ strict, and Lack~\cite[Ex.~5.4, Prop.~6.1]{Lack2004} classifies the resulting PROP as the PROP for special commutative Frobenius monoids.

\begin{remark}
\label{rem:cospan-boundary}
For $\cB = \FinSet$, the object $1 \in \Cospan(\FinSet)$ is equipped with canonical structure maps in $\Cospan(\FinSet)$ induced by the finite-coproduct structure of $\FinSet$: the cospans $[1+1 \xrightarrow{\nabla} 1 \xleftarrow{\id} 1]$ and $[1 \xrightarrow{\id} 1 \xleftarrow{\nabla} 1+1]$ realizing a multiplication and a comultiplication, and the cospans $[\emptyset \xrightarrow{!} 1 \xleftarrow{\id} 1]$ and $[1 \xrightarrow{\id} 1 \xleftarrow{!} \emptyset]$ realizing a unit and a counit, where $!: \emptyset \to 1$ is the unique map from the initial object. (Note that this counit is obtained by reversing the cospan direction: the unique map from the initial object appears as the second leg, not as a map $1 \to \emptyset$.) These cospans assemble into a special commutative Frobenius algebra structure on $1$ (Lack~\cite[Ex.~5.4]{Lack2004}; Coya--Fong~\cite[Prop.~6.1]{CoyaFong2017}), made precise in Definition~\ref{def:scfa-cospans}.
\end{remark}

\subsection{The canonical embedding}
\label{ssec:cospan-embedding}

\begin{definition}
\label{def:iota}
The canonical embedding $\iota: \cB \to \Cospan(\cB)$ sends an object $X$ to itself and a morphism $f: X \to Y$ to the cospan $[X \xrightarrow{f} Y \xleftarrow{\id} Y]$.
\end{definition}

\begin{proposition}
\label{prop:iota-smon}
The canonical embedding $\iota: \cB \to \Cospan(\cB)$ is a strong symmetric monoidal functor when $\cB$ is regarded as a symmetric monoidal category under its coproduct structure. It is faithful and injective on objects.
\end{proposition}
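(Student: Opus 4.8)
The plan is to verify the four assertions---functoriality, strong symmetric monoidality, faithfulness, and injectivity on objects---in turn, each reducing to the single observation that the second leg of $\iota(f)$ is an identity, so that both pushout composition and the cospan-isomorphism relation collapse.

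First I would check functoriality. By Definition~\ref{def:iota} the identity $\id_X$ maps to $[X \xrightarrow{\id} X \xleftarrow{\id} X]$, which is by Definition~\ref{def:cospan} the identity cospan at $X$. For composition, given $f\colon X\to Y$ and $g\colon Y\to Z$ in $\cB$, the composite $\iota(g)\circ\iota(f)$ is formed by the pushout of the span $Y \xleftarrow{\id} Y \xrightarrow{g} Z$ assembled from the second leg of $\iota(f)$ and the first leg of $\iota(g)$. Since one leg of this span is the identity, hence an isomorphism, the pushout is canonically $Z$, with structure map $g$ out of $Y$ and $\id_Z$ out of $Z$. The resulting composite cospan is therefore $[X \xrightarrow{g\circ f} Z \xleftarrow{\id} Z] = \iota(g\circ f)$, so $\iota$ is a functor.

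Next I would establish strong symmetric monoidality. Since $\iota$ is the identity on objects and both the domain (the coproduct monoidal structure on $\cB$) and the codomain (Definition~\ref{def:cospan}) use the same coproduct $+$ with unit $\emptyset$, the object assignment is compatible on the nose. On morphisms, for $f\colon X\to Y$ and $f'\colon X'\to Y'$ one has $\iota(f)+\iota(f') = [X+X' \xrightarrow{f+f'} Y+Y' \xleftarrow{\id+\id} Y+Y']$, and because $\id+\id=\id_{Y+Y'}$ this is exactly $\iota(f+f')$. The monoidal comparison maps are the canonical coproduct coherence isomorphisms; they are invertible and satisfy the associativity and unit coherence axioms by the universal property of finite coproducts, and preservation of the symmetry follows because the symmetry of $\Cospan(\cB)$ on $X,Y$ is induced by the coproduct symmetry $X+Y\cong Y+X$ of $\cB$, which $\iota$ sends to the former. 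With the skeletal choice of coproducts used for $\cB=\FinSet$ these comparison maps are identities, so $\iota$ is even strict; in general it is strong.

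Finally I would check faithfulness and injectivity on objects. For faithfulness, if $\iota(f)=\iota(g)$ for $f,g\colon X\to Y$, then an isomorphism of the representing cospans is an isomorphism $\phi\colon Y\to Y$ of apexes with $\phi\circ f=g$ and $\phi\circ\id_Y=\id_Y$; the second equation forces $\phi=\id_Y$, whence $f=g$. Injectivity on objects is immediate since $\iota$ acts as the identity on objects. The only point requiring genuine care is the monoidal coherence, where coproducts and pushouts are defined only up to canonical isomorphism; I expect this to be the main obstacle, though it is routine and handled entirely by the universal properties of coproducts and pushouts. The computational heart of the argument is the collapse of the pushout along the identity leg, which simultaneously yields functoriality and the strictness of the tensor on morphisms.
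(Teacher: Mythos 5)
Your proof is correct and takes essentially the same approach as the paper's own (much terser) proof: direct verification from Definitions~\ref{def:cospan} and~\ref{def:iota}, with the identity second leg collapsing the pushout for functoriality, the coproduct tensor restricting along $\iota$ for strong symmetric monoidality, and the apex-isomorphism argument for faithfulness. Your write-up simply supplies the details the paper leaves implicit, including the correct precise form of the faithfulness argument (the apex isomorphism $\phi$ is forced to be $\id_Y$ by commutation with the identity leg).
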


\begin{proof}
Preservation of composition and identity is direct from Definition~\ref{def:iota}. Symmetric monoidality follows from the fact that the coproduct tensor in $\Cospan(\cB)$ restricts, on the image of $\iota$, to the coproduct in $\cB$. Faithfulness follows from the existence of a retraction: the cospan $[X \xrightarrow{f} Y \xleftarrow{\id} Y]$ determines $f$ uniquely. Injectivity on objects is by definition.
\end{proof}

\subsection{Gluing hosts}
\label{ssec:gluing-hosts}

We formalize the notion of a symmetric monoidal realization of cospan-style gluing.

\begin{definition}[Gluing host]
\label{def:gluing-host}
A \emph{gluing host} over $\cB$ is a triple $(\cA, j, \{\Phi_X\}_{X \in \cB})$, where:
\begin{itemize}[leftmargin=2em]
\item $\cA$ is a symmetric monoidal category;
\item $j: (\cB, +, \emptyset) \to (\cA, \otimes, I)$ is a strong symmetric monoidal functor;
\item for each object $X \in \cB$, $j(X) \in \cA$ is equipped with a specified special commutative Frobenius algebra (SCFA) structure $\Phi_X = (\mu_X, \eta_X, \delta_X, \epsilon_X)$ satisfying the compatibility condition with the monoidal product~\cite[Def.~2.12, Eq.~(11)--(12)]{FongSpivak};
\item \textbf{(Unit coherence axiom)} the SCFA structure $\Phi_\emptyset$ on $j(\emptyset) \cong I$ is the canonical one $(\rho_I^{-1}, \id_I, \rho_I, \id_I)$ of~\cite[Ex.~2.6]{FongSpivak}, equivalently $\eta_I = \id_I = \epsilon_I$.
\end{itemize}
A $1$-morphism $(\cA, j, \Phi) \to (\cA', j', \Phi')$ is a strong symmetric monoidal functor $H: \cA \to \cA'$ such that $H \circ j = j'$ and $H$ sends the chosen SCFA data $\Phi_X$ to $\Phi'_X$ for each $X \in \cB$. A $2$-morphism is a monoidal natural isomorphism compatible with the boundary and SCFA data.

We denote the resulting $2$-category by $\Gluing_\cB$. (We do not claim that $\cB$ itself carries a canonical hypergraph category structure: a category with finite coproducts has a canonical commutative monoid structure on each object via the codiagonal and initial map, but no canonical comonoid structure, since there is no canonical diagonal. The Frobenius structure is supplied at the cospan level: $\Cospan(\cB)$ carries the canonical SCFA structure of Definition~\ref{def:scfa-cospans}, and a gluing host transports this structure along the cospan-extension functor.)
\end{definition}

\begin{remark}[Necessity of unit coherence]
\label{rem:unit-coherence}
The unit coherence axiom is not implied by the remaining clauses. Fong--Spivak give~\cite[Ex.~2.19]{FongSpivak} an explicit symmetric monoidal category in which every other hypergraph-category axiom holds but $\eta_I \neq \id_I$; such a category is \emph{not} a gluing host in our sense. Only in the strict monoidal case does the axiom follow automatically (Fong--Spivak~\cite[Prop.~2.18]{FongSpivak}). The axiom is essential for the strictification/objectwise-free equivalence used in the proof of Proposition~\ref{prop:initiality}.
\end{remark}

\subsection{Initiality of the cospan category}
\label{ssec:initiality}

\begin{proposition}[Initiality of the cospan category]
\label{prop:initiality}
For any finitely cocomplete category $\cB$, the cospan category $\Cospan(\cB)$, equipped with the canonical embedding $\iota: \cB \to \Cospan(\cB)$ and the canonical SCFA structure on each object (Definition~\ref{def:scfa-cospans} below), is an initial object of the $2$-category $\Gluing_\cB$. Explicitly: for any gluing host $(\cA, j)$, there exists a strong symmetric monoidal functor
\[
\Phi: \Cospan(\cB) \longrightarrow \cA
\]
with $\Phi \circ \iota = j$ that sends the canonical SCFA structure on each object of $\Cospan(\cB)$ to the specified SCFA structure on the corresponding object of $\cA$. This $\Phi$ is unique up to unique monoidal natural isomorphism. The proposition is a restatement of the Fong--Spivak free hypergraph category theorem~\cite[Thm.~3.14]{FongSpivak} in the local terminology of Definition~\ref{def:gluing-host}.
\end{proposition}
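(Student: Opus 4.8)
The plan is to prove the proposition by making precise the equivalence of data asserted in its final sentence: a gluing host over $\cB$ in the sense of Definition~\ref{def:gluing-host} is exactly the input to the Fong--Spivak free hypergraph category theorem, after which initiality of $\Cospan(\cB)$ in $\Gluing_\cB$ is their Theorem~3.14 read through this dictionary. First I would set up the correspondence at the level of $2$-categories. A gluing host $(\cA, j, \{\Phi_X\})$ supplies a symmetric monoidal $\cA$, a strong symmetric monoidal $j$ out of $(\cB, +, \emptyset)$, and a compatible SCFA structure $\Phi_X$ on each image object $j(X)$; this is precisely the assertion that the image of $j$ carries the structure of a hypergraph category under $\cB$ in the relative sense of Fong--Spivak, with the unit coherence axiom pinning $\Phi_\emptyset$ to the canonical structure on the monoidal unit so as to match their normalization (Fong--Spivak~\cite[Prop.~2.18, Ex.~2.19]{FongSpivak}). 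I would then check that $1$-morphisms (SCFA-preserving strong monoidal functors over $\cB$) and $2$-morphisms (compatible monoidal natural isomorphisms) correspond under the dictionary, so that $\Gluing_\cB$ is $2$-equivalent to the Fong--Spivak $2$-category whose initial object is $\Cospan(\cB)$.

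For existence I would construct $\Phi$ directly, to exhibit the content the citation packages. On objects, set $\Phi(X) = j(X)$. On morphisms, use the canonical self-duality of each $j(N)$ supplied by $\Phi_N$: the Frobenius cup $\delta_N \circ \eta_N$ and cap $\epsilon_N \circ \mu_N$ let one bend the second leg $j(b)\colon j(Y) \to j(N)$ into a transpose $j(N) \to j(Y)$, and I would define
\[
\Phi\bigl([X \xrightarrow{a} N \xleftarrow{b} Y]\bigr) \;=\; (\text{transpose of } j(b)) \circ j(a),
\]
a morphism $j(X) \to j(Y)$. The substantive verification is that this is well defined on isomorphism classes of cospans and functorial: independence of the apex representative follows from naturality of the cup and cap, while compatibility with pushout composition is exactly where the \emph{special} law $\mu_N \circ \delta_N = \id_{j(N)}$ and the Frobenius law are consumed, since the pushout $N +_Y N'$ glued along $Y$ is realized by inserting a comultiplication--multiplication pair along $j(Y)$, which collapses by the special law to the composite transpose. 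Preservation of identities, tensor (via the compatibility of each $\Phi_X$ with $\otimes$), and symmetry is then routine, given that $\iota$ is strong symmetric monoidal (Proposition~\ref{prop:iota-smon}) and that $\Cospan(\cB)$ carries the canonical SCFA structure of Definition~\ref{def:scfa-cospans}.

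Uniqueness follows from generation: every morphism of $\Cospan(\cB)$ is a composite and tensor of images $\iota(f)$ and the canonical Frobenius maps on objects (Remark~\ref{rem:cospan-boundary}), and a gluing-host $1$-morphism fixes $j = \Phi \circ \iota$ together with the SCFA data, so any competing $\Phi'$ agrees with $\Phi$ on all generators and hence everywhere, forcing uniqueness up to the unique monoidal natural isomorphism witnessing strong monoidality. I expect the genuine difficulty to be coherence-theoretic rather than combinatorial: in the non-strict monoidal setting one must show the leg-bending construction is insensitive to associator and unitor bracketing, and that the unit coherence axiom --- not implied by the other clauses, per Remark~\ref{rem:unit-coherence} --- is precisely what rigidifies $\Phi_\emptyset$ so that the objectwise-free strictification argument goes through. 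Rather than reprove that coherence theorem, I would discharge it through the dictionary of the first paragraph and invoke Fong--Spivak~\cite[Thm.~3.14]{FongSpivak} verbatim, yielding both the strong symmetric monoidal $\Phi$ with $\Phi \circ \iota = j$ preserving SCFA structures and its uniqueness up to unique monoidal natural isomorphism.
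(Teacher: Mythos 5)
Your overall strategy---read Definition~\ref{def:gluing-host} as a dictionary into the Fong--Spivak setting, cite their Theorem~3.14 for the coherence-heavy content, and derive uniqueness from generation by $\iota$-images and the canonical Frobenius maps---is the same as the paper's proof, which is likewise a citation-plus-generation sketch. The genuine gap is in your explicit existence construction. The step ``independence of the apex representative follows from naturality of the cup and cap'' invokes a naturality that does not exist: in a hypergraph category the cups $\delta_A\circ\eta_A$ and caps $\epsilon_A\circ\mu_A$ are \emph{not} natural in $A$, because morphisms are not required to commute with the chosen Frobenius structures. This fails even in $\Cospan(\cB)$ itself with its canonical structures: $(\iota(f)\otimes\iota(f))\circ(\delta_X\circ\eta_X)$ is the cospan with apex $Y+_XY$, which equals $\delta_Y\circ\eta_Y$ only when $f$ is epic. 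What your well-definedness step actually needs is that for every isomorphism $\phi\colon N\to N'$ in $\cB$ the transpose of $j(\phi)$ with respect to the chosen self-dualities equals $j(\phi)^{-1}$, and your pushout step needs a compatibility between $j$-images of pushout legs and the chosen $\Phi_X$. Neither is among the axioms of Definition~\ref{def:gluing-host}, and neither is derivable from them; both are \emph{consequences} of the existence of $\Phi$, so assuming them while constructing $\Phi$ is circular.

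The failure is not hypothetical, because nothing in Definition~\ref{def:gluing-host} ties $j$'s action on morphisms to the chosen Frobenius data. Take $\cB=\FinSet$, let $\cA$ be the category of finite sets and relations with cartesian-product tensor, let $j(X)=2^X$ with $j(f)$ the graph of the direct-image map, and equip each $j(X)$ with the standard ``copying'' Frobenius structure of relations: every clause of Definition~\ref{def:gluing-host} holds (strong symmetric monoidality, tensor compatibility, unit coherence). But $\iota(\nabla)$ is literally the canonical $\mu$ of Definition~\ref{def:scfa-cospans}, so any comparison functor would have to send this single morphism both to $j(\nabla)$ (the graph of logical or: four pairs) and to $\mu_{j(1)}$ (the copying multiplication: two pairs), which is impossible even up to monoidal coherence, since isomorphisms of relations are bijections and preserve the number of pairs. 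For such a host your formula is ill-defined and no $\Phi$ exists at all. The compatibility silently assumed by your leg-bending---and glossed by your paragraph-one dictionary, since Fong--Spivak freeness is freeness on a \emph{set} of objects, whereas the constraint $\Phi\circ\iota=j$ concerns the morphisms of $\cB$---is the extra condition $\mu_{j(X)}=j(\nabla_X)$, $\eta_{j(X)}=j(!_X)$, which is exactly what holds for the canonical structures in $\Cospan(\cB)$, where $\mu_X=\iota(\nabla_X)$ and $\eta_X=\iota(!_X)$. The paper's sketch, by contrast, never evaluates $\Phi$ on a raw cospan: it defines $\Phi$ only on generators via the leg factorization of~\cite[Lem.~3.6]{FongSpivak} and defers all consistency to~\cite[Prop.~3.8, Thm.~3.14]{FongSpivak}, so the unproved compatibility stays inside the citation rather than inside a verification claimed to be routine. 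If you keep the direct construction, you must either add the displayed compatibility as a hypothesis or explain how it follows from the stated axioms; as written, that step would fail.
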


\begin{proof}[Proof sketch]
The result is the standard universal property of $\Cospan(\cB)$ in the formulation of Fong--Spivak~\cite[Thm.~3.14]{FongSpivak}: for any set $\Lambda$, $\Cospan_\Lambda$ (the cospan PROP/category over $\Lambda$) is the free hypergraph category on $\Lambda$. Specializing to $\Lambda = \mathrm{Ob}(\cB)$ and using the further structure of the coslice along $\iota: \cB \to \Cospan(\cB)$, this gives the comparison functor required by the present statement. See also Lack~\cite[\S 5.4]{Lack2004} for the case $\cB = \mathsf{F}$, and Coya--Fong~\cite[Prop.~6.1]{CoyaFong2017} for the explicit PROP-level statement.

\textit{Existence.} Every morphism of $\Cospan(\cB)$ is presented by a cospan $X \xrightarrow{a} N \xleftarrow{b} Y$ in $\cB$. By Fong--Spivak~\cite[Lem.~3.6]{FongSpivak}, $\Cospan$ is generated as a symmetric monoidal category by the Frobenius generators $\mu, \eta, \delta, \epsilon$ together with identities and symmetries, via the factorization of each leg as a permutation followed by order-preserving surjection followed by order-preserving injection. Consequently each cospan decomposes as a tensor and composite of: (i) morphisms in the image of $\iota$; (ii) canonical SCFA generators on objects of the form $\iota(X)$ for $X \in \cB$; (iii) symmetries. The value of $\Phi$ on a cospan is determined by the rule $\Phi \circ \iota = j$ together with the SCFA-preservation condition: $\Phi(\mu_X) = \mu_{j(X)}$, $\Phi(\delta_X) = \delta_{j(X)}$, etc. Preservation of composition (which in $\Cospan(\cB)$ is pushout) follows because $\Phi$ preserves tensors and sends the canonical SCFA generators to their images, and pushouts in $\Cospan(\cB)$ admit a normal form in terms of these generators (Fong--Spivak~\cite[Prop.~3.8]{FongSpivak}).

\textit{Uniqueness.} Any strong symmetric monoidal functor $\Phi': \Cospan(\cB) \to \cA$ extending $j$ and preserving the chosen SCFA structures must agree with $\Phi$ on all generators (by definition). Since these generators generate $\Cospan(\cB)$ as a symmetric monoidal category (Fong--Spivak~\cite[Lem.~3.6]{FongSpivak}), $\Phi'$ agrees with $\Phi$ on every morphism, up to the coherence isomorphisms of the monoidal structure of $\cA$. The resulting monoidal natural isomorphism $\Phi \simeq \Phi'$ is unique by the coherence theorem (Mac Lane~\cite[Ch.~XI]{MacLane}).
\end{proof}

\begin{remark}[From initiality to the comparison functor]
\label{rem:initiality-comparison}
Proposition~\ref{prop:initiality} provides, for every gluing host $(\cA, j, \Phi)$, a canonical comparison functor $\Phi: \Cospan(\cB) \to \cA$. This functor is the basic vehicle by which structure in $\Cospan(\cB)$ transports to $\cA$. Section~\ref{sec:frobenius} exploits this transport to move the Frobenius package.
\end{remark}

\subsection{Completeness of the collapse}
\label{ssec:completeness}

The cospan realization, after collapse to its ancestry invariant, recovers $\FinCorel$. The refinement from special to extraspecial commutative Frobenius monoids corresponds to the corestriction from cospans to jointly-epic cospans; see Coya--Fong~\cite[\S 6]{CoyaFong2017}.

\begin{definition}[Ancestry functor on cospans]
\label{def:pi-bar}
Let $\cB = \FinSet$. Define a strict symmetric monoidal functor
\[
\bar\Pi: \Cospan(\FinSet) \longrightarrow \FinCorel
\]
by sending an object $n \in \FinSet$ to $n \in \FinCorel$, and a cospan $[X \xrightarrow{a} N \xleftarrow{b} Y]$ to the equivalence relation on $X \sqcup Y$ induced by the joint epi map $a \sqcup b: X \sqcup Y \to N$, i.e., $x \sim y$ iff $a(x) = b(y)$ (or $a(x) = a(y)$, $b(x) = b(y)$ for same-side pairs).
\end{definition}

\begin{proposition}[Functoriality of $\bar\Pi$]
\label{prop:pi-bar-functor}
$\bar\Pi: \Cospan(\FinSet) \to \FinCorel$ is well-defined on isomorphism classes, respects composition under pushout, and is strict symmetric monoidal.
\end{proposition}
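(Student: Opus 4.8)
The plan is to verify the three assertions in increasing order of difficulty, treating well-definedness and the monoidal clauses as routine and concentrating the effort on composition. For well-definedness on isomorphism classes, I would observe that an isomorphism $\phi: N \to N'$ of apexes commuting with the legs of two cospans satisfies $a'(x) = \phi(a(x))$ and $b'(y) = \phi(b(y))$; since $\phi$ is a bijection, $a(x) = b(y)$ holds iff $a'(x) = b'(y)$, so the two induced kernel relations on $X \sqcup Y$ coincide. Hence $\bar\Pi$ descends to isomorphism classes.

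For the monoidal clauses, the identity cospan $[X \xrightarrow{\id} X \xleftarrow{\id} X]$ has joint map $\id \sqcup \id: X \sqcup X \to X$, whose kernel pairs each input with the corresponding output, i.e.\ the identity corelation. Strict monoidality then reduces to two checks: that the joint map of a coproduct of cospans is the coproduct of the joint maps, so that kernel relations add up disjointly and match the tensor in $\FinCorel$; and that the symmetry cospan, with legs the two coproduct injections in swapped order, induces the transposition corelation. Both follow at once from the fact that coproduct injections in $\FinSet$ are jointly injective across distinct summands. Preservation of the monoidal unit is the empty case.

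The substantive clause is preservation of composition. Let the composable cospans be $X \xrightarrow{a} N \xleftarrow{g} Y$ and $Y \xrightarrow{h} N' \xleftarrow{b} Z$, with composite formed by the pushout of $N \xleftarrow{g} Y \xrightarrow{h} N'$. I would establish the key lemma that the kernel relation on $X \sqcup Z$ of the composite's joint map into $N +_Y N'$ equals the restriction to $X \sqcup Z$ of the transitive closure of $\bar\Pi(\text{first}) \cup \bar\Pi(\text{second})$ on $X \sqcup Y \sqcup Z$ --- which is exactly composition in $\FinCorel$ (Definition~\ref{def:fincorel}). To prove it I would use the explicit finite-set pushout: $N +_Y N'$ is the quotient $(N \sqcup N')/\!\approx$, where $\approx$ is generated by $g(y) \approx h(y)$ for $y \in Y$. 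Two elements $p, q \in N \sqcup N'$ are $\approx$-related iff there is a zigzag $p = p_0, \ldots, p_k = q$ with each consecutive pair either equal within a single summand or bridged by some $g(y_i) \approx h(y_i)$. For $x \in X$ and $z \in Z$, their images in the pushout are $[a(x)]$ and $[b(z)]$, and these agree iff such a zigzag connects $a(x)$ to $b(z)$.

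The crux, and the step I expect to be the main obstacle, is the translation between two a priori different combinatorial encodings of the same connected-component data: chains in the boundary relations on $X \sqcup Y \sqcup Z$ versus zigzags of apex elements in $N \sqcup N'$ under $\approx$. I would argue both directions. Given a boundary chain witnessing $x \sim z$ in the transitive closure, each step from the first cospan records an equality of $a$- or $g$-images in $N$, each step from the second an equality of $h$- or $b$-images in $N'$, and each traversal through a $Y$-element $y$ supplies the bridge $g(y) \approx h(y)$; reading off apex images along the chain yields an $\approx$-zigzag from $a(x)$ to $b(z)$. Conversely, any $\approx$-zigzag decomposes into maximal runs inside a single summand separated by bridges $g(y_i) \approx h(y_i)$, and the bridging $Y$-elements, interleaved with the $X$- and $Z$-endpoints, assemble into a boundary chain. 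The bookkeeping --- matching same-side coincidences of boundary elements to within-summand equalities of apex images, and ensuring the endpoints land on the correct side --- is where care is required, but each case is a direct unwinding of the definitions of the two kernel relations and the generated equivalence $\approx$. With the lemma established, $\bar\Pi(\text{second} \circ \text{first}) = \bar\Pi(\text{second}) \circ \bar\Pi(\text{first})$ is immediate, completing functoriality.
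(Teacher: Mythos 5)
Your proposal is correct, and its overall skeleton (well-definedness via apex isomorphisms preserving the kernel of the joint map; tensor and symmetry checks for strict monoidality; reduction of composition to an identification of pushout kernels with transitive-closure composition) matches the paper's. The genuine difference lies in how the composition clause is discharged. The paper states the same key fact --- that the kernel of the joint map $X \sqcup Z \to N +_Y N'$ is the restriction to $X \sqcup Z$ of the transitive closure of the two boundary relations, hence matches composition in $\FinCorel$ (Definition~\ref{def:fincorel}) --- but then defers its verification to Coya--Fong~\cite[Thm.~4.5]{CoyaFong2017}, so the paper's proof is essentially a citation at its crux. You instead prove this from first principles: you describe the finite-set pushout explicitly as $(N \sqcup N')/\!\approx$ with $\approx$ generated by $g(y) \approx h(y)$, and you carry out the two-way translation between boundary chains in $X \sqcup Y \sqcup Z$ and $\approx$-zigzags in $N \sqcup N'$. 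That translation is sound: generators of $\approx$ always bridge the two summands, so within-summand steps of a zigzag are literal equalities of images (which correspond exactly to single $R$- or $S$-steps on the boundary), and the bridging elements of $Y$ interleave to give the chain; conversely each chain step records an equality of images in $N$ or $N'$, with $Y$-traversals supplying bridges. Your version is self-contained and makes the combinatorics explicit --- in effect you are reproving the relevant part of the cited Coya--Fong result --- at the cost of length; the paper's version is shorter and leans on established literature, at the cost of not being verifiable within the paper itself. Either is acceptable; if you keep your version, state the zigzag-chain correspondence as a standalone lemma (including the same-side cases $x_1 \sim x_2$ in $X$ and $z_1 \sim z_2$ in $Z$, which you mention only in passing) so that the case analysis you flag as delicate is fully recorded rather than gestured at.
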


\begin{proof}
Well-definedness on isomorphism classes: two cospans with isomorphic apices yield the same joint-epi equivalence relation, since an apex isomorphism does not alter the kernel of the joint epi.

Functoriality under pushout composition: given composable cospans $[X \to N \leftarrow Y]$ and $[Y \to N' \leftarrow Z]$, the pushout $N +_Y N'$ has underlying set obtained from $N \sqcup N'$ by identifying elements along the shared boundary $Y$. The joint-epi relation $X \sqcup Z \to N +_Y N'$ is the composition of the two joint-epi relations, precisely the transitive closure restricted to $X \sqcup Z$. This matches $\FinCorel$ composition (Definition~\ref{def:fincorel}). The agreement of the two composition laws is the content of Coya--Fong~\cite[Thm.~4.5]{CoyaFong2017}.

Strict symmetric monoidality: tensor in $\Cospan(\FinSet)$ is coproduct of cospans, which maps to coproduct of equivalence relations in $\FinCorel$; the symmetries match.
\end{proof}

\begin{lemma}[Surjectivity of $\bar\Pi$ on morphisms]
\label{lem:pi-bar-surj}
For every corelation $R \in \FinCorel(m, n)$, there exists a cospan $[m \to N \leftarrow n]$ in $\Cospan(\FinSet)$ with $\bar\Pi([m \to N \leftarrow n]) = R$.
\end{lemma}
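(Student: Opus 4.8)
The plan is to realize $R$ directly as the kernel of a quotient map, exploiting that by Definition~\ref{def:pi-bar} the functor $\bar\Pi$ sends a cospan to the kernel equivalence of its joint leg. First I would set $N := (\underm \sqcup \undern)/R$, the finite set of $R$-equivalence classes, and let $\pi \colon \underm \sqcup \undern \to N$ be the canonical projection. Writing $a := \pi|_{\underm} \colon \underm \to N$ and $b := \pi|_{\undern} \colon \undern \to N$ for the two restrictions, I obtain a cospan $[m \xrightarrow{a} N \xleftarrow{b} n]$ in $\Cospan(\FinSet)$ whose joint map $a \sqcup b$ is precisely $\pi$.

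It then remains to compute $\bar\Pi$ on this cospan. By Definition~\ref{def:pi-bar}, $\bar\Pi([m \xrightarrow{a} N \xleftarrow{b} n])$ is the equivalence relation on $\underm \sqcup \undern$ given by $x \sim y$ iff $(a \sqcup b)(x) = (a \sqcup b)(y)$, i.e.\ the kernel of $\pi$. Since $\pi$ is the quotient by $R$, two elements have the same image under $\pi$ exactly when they are $R$-equivalent; hence the kernel of $\pi$ equals $R$ itself, giving $\bar\Pi([m \xrightarrow{a} N \xleftarrow{b} n]) = R$ as required.

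The only auxiliary point worth recording is that $\pi$ is surjective, so the constructed cospan is jointly epic; this is automatic for a quotient projection and places the cospan in the jointly-epic corestriction used later for the extraspecial refinement (Coya--Fong~\cite[Thm.~4.5]{CoyaFong2017}). I do not expect any genuine obstacle here: the lemma is the surjective half of the standard bijection between corelations on $\underm \sqcup \undern$ and isomorphism classes of jointly-epic cospans $m \to N \leftarrow n$, and the quotient-set construction $N = (\underm \sqcup \undern)/R$ furnishes an explicit set-theoretic section of $\bar\Pi$ on morphisms, so the argument is a direct verification rather than a construction requiring choices.
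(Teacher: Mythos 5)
Your construction is exactly the paper's: take the apex $N := (\underm \sqcup \undern)/R$, use the restricted quotient projections as the two legs, and observe that the kernel of the joint map is $R$ itself. The proposal is correct and matches the paper's proof essentially verbatim, including the observation that the resulting cospan is jointly epic.
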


\begin{proof}
Let $R$ be an equivalence relation on $\underm \sqcup \undern$. Let $N := (\underm \sqcup \undern)/R$, the set of equivalence classes. Let $a: \underm \to N$ and $b: \undern \to N$ be the quotient projections restricted to each boundary component. Then $a \sqcup b: \underm \sqcup \undern \to N$ is surjective (each class contains at least one element and is hit), and its kernel is precisely $R$. The cospan $[\underm \xrightarrow{a} N \xleftarrow{b} \undern]$ is therefore mapped to $R$ under $\bar\Pi$.
\end{proof}

\begin{theorem}[Completeness of the collapse; Lack~\cite{Lack2004}, Coya--Fong~\cite{CoyaFong2017}]
\label{thm:completeness}
Let $\bar\Pi: \Cospan(\FinSet) \to \FinCorel$ be the ancestry functor of Definition~\ref{def:pi-bar}. The induced quotient
\[
\Cospan(\FinSet)/\ker(\bar\Pi) \xrightarrow{\ \simeq\ } \FinCorel
\]
is an equivalence of PROPs, and the target is equivalent to $\ESCFM$, the PROP for extraspecial commutative Frobenius monoids in the sense of Coya--Fong~\cite[Def.~3.1]{CoyaFong2017}.
\end{theorem}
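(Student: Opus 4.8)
The plan is to prove the two asserted equivalences separately: the first, $\Cospan(\FinSet)/\ker(\bar\Pi) \simeq \FinCorel$, as an instance of the first-isomorphism theorem for PROPs, and the second, $\FinCorel \simeq \ESCFM$, as a citation of Coya--Fong routed through Lack's classification and the extra law. Everything needed for the first half is already in place in this section, so the real content lies in pinning down the kernel congruence in the second half.

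For the equivalence $\Cospan(\FinSet)/\ker(\bar\Pi) \simeq \FinCorel$, I would reuse verbatim the universal-property argument of Proposition~\ref{prop:universal-factor}. By Proposition~\ref{prop:pi-bar-functor}, $\bar\Pi$ is a strict symmetric monoidal functor that is the identity on objects, so its kernel congruence $\ker(\bar\Pi)$, defined by $f \sim g \iff \bar\Pi(f) = \bar\Pi(g)$, is a PROP congruence and the projection $q: \Cospan(\FinSet) \to \Cospan(\FinSet)/\ker(\bar\Pi)$ is a PROP morphism. The kernel-quotient universal property then yields a unique identity-on-objects strict symmetric monoidal functor $\widehat{\bar\Pi}$ with $\bar\Pi = \widehat{\bar\Pi} \circ q$, and $\widehat{\bar\Pi}$ is faithful by construction. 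Lemma~\ref{lem:pi-bar-surj} gives surjectivity of $\bar\Pi$ on every hom-set, hence fullness of $\widehat{\bar\Pi}$. An identity-on-objects functor that is fully faithful is an isomorphism of PROPs (its inverse is again identity-on-objects and strict symmetric monoidal), a fortiori an equivalence; this settles the first claim.

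For $\FinCorel \simeq \ESCFM$ I would cite Coya--Fong~\cite[Thm.~4.6]{CoyaFong2017} directly, but motivate it through the bridge sketched in \S\ref{ssec:completeness}. Lack~\cite[\S 5.4]{Lack2004} classifies $\Cospan(\FinSet)$ as the PROP $\CSA$ for special commutative Frobenius monoids. Corestriction to jointly-epic cospans --- equivalently, passage to corelations via $\bar\Pi$ (Coya--Fong~\cite[Thm.~4.5]{CoyaFong2017}) --- imposes the extra law $\epsilon \circ \eta = \id_0$, upgrading ``special'' to ``extraspecial.'' Concretely, the composite $\epsilon \circ \eta: 0 \to 0$ in $\Cospan(\FinSet)$ is the non-jointly-epic cospan $[\emptyset \to 1 \leftarrow \emptyset]$, whose $\bar\Pi$-image is the empty relation, namely $\id_0$; thus $\ker(\bar\Pi)$ contains the pair $(\epsilon \circ \eta, \id_0)$, the extra law. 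Combined with the first claim, this already exhibits $\FinCorel$ as a quotient of $\CSA$ by a congruence containing the extra law.

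The hard part will be verifying that $\ker(\bar\Pi)$ is generated by \emph{exactly} the extra law and nothing more --- that two cospans have equal jointly-epic corestriction if and only if they are related by the extra law together with its closure under composition, tensor, and symmetry. For this I would reduce to Coya--Fong~\cite[Lem.~6.3]{CoyaFong2017}, which establishes the jointly-epic factorization: every cospan factors, uniquely up to apex isomorphism, as a jointly-epic cospan with additional ``dead'' apex components not in the image of the joint map $a \sqcup b$, and these dead components are precisely what the extra law annihilates via the Frobenius structure. Granting this factorization, the congruence $\ker(\bar\Pi)$ coincides with the extra-law congruence, so $\Cospan(\FinSet)/\ker(\bar\Pi) \simeq \CSA/\langle\text{extra}\rangle \simeq \ESCFM$; chaining with the first equivalence yields $\FinCorel \simeq \ESCFM$ and completes the proof.
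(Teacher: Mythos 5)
Your proposal matches the paper's proof: the first equivalence is established exactly as in the paper (kernel congruence of the strict symmetric monoidal $\bar\Pi$ is a PROP congruence, the induced functor is faithful by construction and full by Lemma~\ref{lem:pi-bar-surj}, and an identity-on-objects fully faithful strict symmetric monoidal functor is an isomorphism of PROPs), and the second is reduced to the same citations (Lack~\cite[\S 5.4]{Lack2004} plus Coya--Fong~\cite[Thm.~4.6, Lemmas~6.2--6.3]{CoyaFong2017}). Your explicit computation of $\epsilon\circ\eta = [\emptyset\to 1\leftarrow\emptyset]$ and the framing of $\ker(\bar\Pi)$ as the extra-law congruence simply unpack what the paper compresses into the citation of Coya--Fong's coequalizer argument.
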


\begin{proof}
Since $\bar\Pi$ is a strict symmetric monoidal functor (Proposition~\ref{prop:pi-bar-functor}), its kernel congruence $\ker(\bar\Pi) = \{(f, g) \mid \bar\Pi(f) = \bar\Pi(g)\}$ is stable under composition, tensor, and symmetry: if $f_1 \equiv g_1$ and $f_2 \equiv g_2$ then $\bar\Pi(f_1 \circ f_2) = \bar\Pi(f_1) \circ \bar\Pi(f_2) = \bar\Pi(g_1) \circ \bar\Pi(g_2) = \bar\Pi(g_1 \circ g_2)$, and similarly for $+$ and symmetries. Hence $\ker(\bar\Pi)$ is a PROP congruence, and the quotient $\Cospan(\FinSet)/\ker(\bar\Pi)$ is a PROP. The induced functor to $\FinCorel$ is:
\begin{itemize}[leftmargin=2em]
\item identity on objects (both categories have object monoid $\mathbb{N}$);
\item faithful by construction (we quotiented by the kernel of $\bar\Pi$);
\item full by Lemma~\ref{lem:pi-bar-surj};
\item strict symmetric monoidal (induced from Proposition~\ref{prop:pi-bar-functor}).
\end{itemize}
An identity-on-objects, fully faithful, essentially surjective strict symmetric monoidal functor between PROPs is an isomorphism of PROPs. This proves the first claim. The second claim follows from Coya--Fong~\cite[Thm.~4.6]{CoyaFong2017}, whose coequalizer proof (Lemmas~6.2--6.3) exhibits $\FinCorel$ as $\Cospan(\FinSet)$ modulo the extra law $[\emptyset \to 1 \leftarrow \emptyset] = \id_\emptyset$, together with Lack~\cite[\S 5.4]{Lack2004} for the identification of $\Cospan(\FinSet)$ with the PROP for special commutative Frobenius monoids. The passage from \emph{special} to \emph{extra}special at the PROP level corresponds to the passage from cospans to jointly-epic cospans at the realization level.
\end{proof}

\begin{remark}[From Section~\ref{sec:syntax} to Section~\ref{sec:gluing}]
\label{rem:section2-to-3}
The relationship between the ancestry functor $\Pi: \Syn(\delta) \to \FinCorel$ of Section~\ref{sec:syntax} and the cospan-level ancestry functor $\bar\Pi: \Cospan(\FinSet) \to \FinCorel$ of this section is as follows. There is a strong symmetric monoidal realization functor
\[
\Real: \Syn(\delta) \longrightarrow \Cospan(\FinSet)
\]
sending the generator $\delta: 1 \to 2$ to the cospan $[1 \xrightarrow{\id} 1 \xleftarrow{\nabla} 1 + 1]$ (the ``co-diagonal cospan''), and extending by the universal property of Proposition~\ref{prop:syn-delta-initial}. The diagram
\[
\begin{tikzcd}[row sep=large]
\Syn(\delta) \ar[r, "\Real"] \ar[dr, "\Pi"'] & \Cospan(\FinSet) \ar[d, "\bar\Pi"] \\
& \FinCorel
\end{tikzcd}
\]
commutes. Section~\ref{sec:syntax} characterized the image of $\Pi$ as a proper sub-PROP ($\FinCorel^\circ$). Section~\ref{sec:gluing} has now shown that $\bar\Pi$ is essentially surjective on morphisms. The extra generators hit by $\bar\Pi$ but not by $\Pi$ are exactly those requiring counit, unit, or multiplication --- structure supplied at $1 \in \Cospan(\FinSet)$ by coproduct universality, but absent from the primitive generator $\delta$ alone.
\end{remark}

\section{Frobenius Transport Along Cospan Realizations}
\label{sec:frobenius}

The canonical Frobenius package on $1 \in \Cospan(\cB)$ transports to the boundary object of any gluing host along the comparison functor of Proposition~\ref{prop:initiality}. The transported structure is determined up to unique monoidal natural isomorphism.

\subsection{The canonical Frobenius package in \texorpdfstring{$\Cospan(\cB)$}{Cospan(B)}}
\label{ssec:scfa-cospan}

Let $\cB$ be a finitely cocomplete category. For concreteness we take $\cB = \FinSet$ and focus on $1 \in \FinSet$; the construction generalizes to any chosen object of a finitely cocomplete base. The relevant maps in $\FinSet$ are the codiagonal $\nabla: 1+1 \to 1$ (the fold map, supplied by the universal property of coproducts) and the unique map $!: \emptyset \to 1$. Note that there is no canonical diagonal $1 \to 1+1$ in $\FinSet$: there are two coproduct injections $\iota_1, \iota_2: 1 \to 1+1$, and no preferred map between them. The comultiplication of the Frobenius structure below is therefore not a base-category map; it is realized as a cospan with right leg $\nabla$.

\begin{definition}[Canonical SCFA cospans]
\label{def:scfa-cospans}
Define the following cospans in $\Cospan(\cB)$ with apex $1$:
\begin{align*}
\mu &:= [1 + 1 \xrightarrow{\nabla} 1 \xleftarrow{\id} 1]: 2 \to 1, \\
\delta &:= [1 \xrightarrow{\id} 1 \xleftarrow{\nabla} 1 + 1]: 1 \to 2, \\
\eta &:= [\emptyset \xrightarrow{!} 1 \xleftarrow{\id} 1]: 0 \to 1, \\
\epsilon &:= [1 \xrightarrow{\id} 1 \xleftarrow{!} \emptyset]: 1 \to 0.
\end{align*}
\end{definition}

\begin{proposition}[Canonical SCFA structure]
\label{prop:scfa-canonical}
The quadruple $(\mu, \eta, \delta, \epsilon)$ makes $1 \in \Cospan(\cB)$ into a special commutative Frobenius algebra (SCFA) object. Explicitly, this quadruple is the image, under the isomorphism $\alpha: \mathbf{Th}(\mathbf{SCFM}) \xrightarrow{\sim} \Cospan(\FinSet)$ of Coya--Fong~\cite[Prop.~6.1]{CoyaFong2017} (attributed there to Lack~\cite[\S 5.4]{Lack2004}), of the generators of the PROP for special commutative Frobenius monoids; in particular, the monoid axioms, comonoid axioms, Frobenius law, and special law all hold.
\end{proposition}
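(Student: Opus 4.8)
The plan is to prove this by transport along the Lack--Coya--Fong isomorphism, treating a direct pushout verification only as an optional cross-check. The key observation is that the proposition is half-stated in its own hypothesis: the claim reduces to confirming that the four cospans of Definition~\ref{def:scfa-cospans} are \emph{exactly} the images under $\alpha$ of the four presenting generators of $\mathbf{Th}(\mathbf{SCFM})$, after which every SCFA axiom transports automatically. So the first and only essential step is a convention match, not a computation.

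First I would recall that $\alpha\colon \mathbf{Th}(\mathbf{SCFM}) \to \Cospan(\FinSet)$ of Coya--Fong~\cite[Prop.~6.1]{CoyaFong2017} (after Lack~\cite[\S 5.4]{Lack2004}) is an identity-on-objects strict symmetric monoidal functor and an isomorphism of PROPs. Under this isomorphism the presenting generators are sent to precisely the codiagonal/unique-map cospans: the multiplication generator $2 \to 1$ to $[1+1 \xrightarrow{\nabla} 1 \xleftarrow{\id} 1]$, the unit $0 \to 1$ to $[\emptyset \xrightarrow{!} 1 \xleftarrow{\id} 1]$, and dually for $\delta$ and $\epsilon$ by reversing the legs. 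I would verify this matching by a direct comparison of the two presentations, checking only that the left/right legs and the orientation conventions (in particular, that the counit is realized by placing $!$ as the \emph{second} leg, per Remark~\ref{rem:cospan-boundary}) agree with Definition~\ref{def:scfa-cospans}. This step carries no arithmetic, only bookkeeping of conventions.

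Given the matching, the monoid, comonoid, Frobenius, and special laws hold in $\Cospan(\FinSet)$ because they hold by definition in the presenting PROP $\mathbf{Th}(\mathbf{SCFM})$, and $\alpha$, being an isomorphism of PROPs, preserves and reflects every equation between morphisms: a relation $L = R$ among generators yields $\alpha(L) = \alpha(R)$, and the quadruple is by construction $\alpha$ applied to the generators. This settles the base case $\cB = \FinSet$. For a general finitely cocomplete $\cB$ and a chosen object in place of $1$, the identical cospans are definable from the codiagonal and the unique map out of the initial object, and the same axiom checks go through verbatim, since each relevant square is a pushout built from coproduct data, which exists in any finitely cocomplete category.

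As a self-contained cross-check I would verify one or two axioms by direct pushout computation in $\FinSet$. The cleanest representative is the special law $\mu \circ \delta = \id_1$: composing $\delta = [1 \xrightarrow{\id} 1 \xleftarrow{\nabla} 1+1]$ with $\mu = [1+1 \xrightarrow{\nabla} 1 \xleftarrow{\id} 1]$ requires the pushout of $1 \xleftarrow{\nabla} 1+1 \xrightarrow{\nabla} 1$, which identifies the two apex points into a single element, so the composite is $[1 \xrightarrow{\id} 1 \xleftarrow{\id} 1] = \id_1$ up to apex isomorphism. The Frobenius law is the step I expect to be the main obstacle in this direct route: it requires computing two separate pushouts, confirming that each produces a three-element apex, and checking that the two resulting cospans coincide up to a \emph{single} apex isomorphism that respects both legs. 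Tracking which injection is which across these pushouts is where orientation errors would creep in. Under the transport route, by contrast, there is no obstacle beyond the convention matching of the first step, which is why I would present that route as the proof and relegate the pushout computations to a remark.
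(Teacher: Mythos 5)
Your transport route is essentially the paper's own proof: the paper likewise reduces everything to the generator matching under the classifying isomorphism $\alpha$ of Coya--Fong (Prop.~6.1), with the pushout-by-coproduct-universality remark (and Fong--Spivak, Ex.~2.9) covering the general finitely cocomplete base exactly as in your third paragraph, so the proposal is correct and takes the same approach. One small correction to your aside: in the direct cross-check, the Frobenius law is easier than you fear, since both composites collapse to the single cospan $[2 \to 1 \leftarrow 2]$ with both legs $\nabla$ --- the pushout apexes are singletons, not three-element sets --- so no delicate tracking of injections arises.
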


\begin{proof}
This is the content of Fong--Spivak~\cite[Ex.~2.9]{FongSpivak} (canonical Frobenius structure on every object of $\Cospan(\cC)$ for finitely cocomplete $\cC$, with generators exactly the cospans built from coproducts and copairings of identities), Lack~\cite[Ex.~5.4]{Lack2004}, and Coya--Fong~\cite[Prop.~6.1]{CoyaFong2017}. Each SCFA equation becomes a pushout equation whose verification reduces to a direct check that two pushouts agree up to canonical isomorphism, which holds by universal properties of coproducts; a PROP-level statement of the correspondence is Fong--Spivak~\cite[Prop.~3.8]{FongSpivak}:
\[
\{\text{SCFMs in } \cC\} \longleftrightarrow \{\text{strict symmetric monoidal functors } \Cospan \to \cC\}.
\]
Coya--Fong's Proposition~6.1 records the action of the classifying isomorphism $\alpha$ on generators:
\begin{align*}
\alpha(\mu_{\mathbf{SCFM}}) &= [2 \to 1 \leftarrow 1], &
\alpha(\eta_{\mathbf{SCFM}}) &= [0 \to 1 \leftarrow 1], \\
\alpha(\delta_{\mathbf{SCFM}}) &= [1 \to 1 \leftarrow 2], &
\alpha(\epsilon_{\mathbf{SCFM}}) &= [1 \to 1 \leftarrow 0].
\end{align*}
which, under the identification of $2$ with $1 + 1$ and the choice of $\nabla: 1 + 1 \to 1$ as the unique map to $1$ in $\FinSet$ (skeletal), coincide with the cospans of Definition~\ref{def:scfa-cospans}. Note that the cospan realization yields \emph{special} commutative Frobenius structure; the \emph{extra} law holds only after passage to the jointly-epic quotient (Theorem~\ref{thm:completeness}), and is not asserted here.
\end{proof}

\begin{remark}[Universal characterization of the canonical SCFA]
\label{rem:scfa-universal}
The quadruple $(\mu, \eta, \delta, \epsilon)$ is not merely \emph{an} SCFA structure on $1 \in \Cospan(\cB)$; it is the generic one. Explicitly: to give a strong symmetric monoidal functor $G: \Cospan(\cB) \to \cC$ is equivalent (up to unique monoidal natural isomorphism) to giving the object $G(1) \in \cC$ equipped with the transported structure maps $G(\mu), G(\eta), G(\delta), G(\epsilon)$, which automatically satisfy the SCFA axioms. This is the universal characterization of $\Cospan(\cB)$ as the free hypergraph category on $\cB$ (Fong--Spivak~\cite[Thm.~3.14]{FongSpivak}; Lack~\cite[\S 5.4]{Lack2004}), specialized (for $\cB = \FinSet$) to the PROP statement of Coya--Fong~\cite[Prop.~6.1]{CoyaFong2017}.
\end{remark}

\subsection{Functoriality of algebra objects}
\label{ssec:alg-functoriality}

\begin{lemma}[Functorial transport]
\label{lem:transport}
Let $F: (\cC, \otimes) \to (\cD, \otimes)$ be a strong symmetric monoidal functor. If $A \in \cC$ carries the structure of a special commutative Frobenius algebra object, then $F(A) \in \cD$ carries a canonical induced SCFA structure obtained by applying $F$ to the structure maps of $A$.
\end{lemma}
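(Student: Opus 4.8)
First I would record the induced structure maps explicitly. Writing $\phi_{X,Y}\colon F(X)\otimes F(Y)\xrightarrow{\sim} F(X\otimes Y)$ and $\phi_0\colon I\xrightarrow{\sim} F(I)$ for the (invertible) coherence isomorphisms of the strong symmetric monoidal functor $F$, I would set
\[
\mu' := F(\mu)\circ \phi_{A,A},\qquad \eta' := F(\eta)\circ \phi_0,\qquad \delta' := \phi_{A,A}^{-1}\circ F(\delta),\qquad \epsilon' := \phi_0^{-1}\circ F(\epsilon).
\]
These are the maps ``obtained by applying $F$'' of the statement, corrected by the coherence isomorphisms so as to have the correct source and target in $\cD$ (for instance $\mu'\colon F(A)\otimes F(A)\to F(A)$ and $\delta'\colon F(A)\to F(A)\otimes F(A)$).

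The cleanest route I would take is the PROP-theoretic one, which delegates all bookkeeping to an already-established universal property. By Lack~\cite[\S 5.4]{Lack2004} (equivalently Remark~\ref{rem:scfa-universal} above), an SCFA structure on $A\in\cC$ is the same datum as a strong symmetric monoidal functor $G_A\colon \Cospan(\FinSet)\to\cC$ with $G_A(1)=A$ carrying the canonical generators $(\mu_{\mathbf{SCFM}},\eta_{\mathbf{SCFM}},\delta_{\mathbf{SCFM}},\epsilon_{\mathbf{SCFM}})$ to $(\mu,\eta,\delta,\epsilon)$. Since a composite of strong symmetric monoidal functors is again strong symmetric monoidal, $F\circ G_A\colon \Cospan(\FinSet)\to\cD$ is such a functor, hence corresponds to an SCFA on $(F\circ G_A)(1)=F(A)$. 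Unwinding the correspondence shows that its structure maps are exactly the $\mu',\eta',\delta',\epsilon'$ above, because the coherence isomorphisms of the composite are assembled from those of $F$ and $G_A$. This proves the lemma with the canonical induced structure explicitly identified.

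For readers preferring a direct verification, I would check each axiom by transporting the corresponding equation on $A$ along $F$. Because $F$ preserves composition and, via $\phi$, tensor, and because $\phi$ is natural and coherent with the associators, unitors, and symmetry, applying $F$ to any commuting diagram built from $\mu,\eta,\delta,\epsilon$, identities, symmetries, and coherence isomorphisms yields a commuting diagram of the same shape among the primed maps. As a sample, the special law transports as
\[
\mu'\circ\delta' = F(\mu)\circ\phi_{A,A}\circ\phi_{A,A}^{-1}\circ F(\delta) = F(\mu\circ\delta) = F(\id_A) = \id_{F(A)},
\]
and the monoid, comonoid, Frobenius, commutativity, and cocommutativity laws follow by the same mechanism, the last two invoking the symmetry-compatibility of $\phi$.

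The hard part will be purely organizational rather than conceptual: tracking the insertions of $\phi_{A,A}$, $\phi_0$, and their inverses, together with the associator and unitor coherences, in the multi-object laws (associativity, unitality, and the two-sided Frobenius law). This is entirely routine and is absorbed by the symmetric monoidal coherence theorem (Mac Lane~\cite[Ch.~XI]{MacLane}), which guarantees that any two parallel composites of coherence isomorphisms agree; under it one may suppress these isomorphisms and read off each transported axiom directly. The PROP-theoretic argument above avoids even this step by reducing the whole claim to closure of strong symmetric monoidal functors under composition.
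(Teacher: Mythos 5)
Your proposal is correct, and its primary argument takes a genuinely different route from the paper's. The paper's proof of Lemma~\ref{lem:transport} is a two-sentence direct argument: an SCFA object is specified by a finite diagram of equations among composites and tensors of the structure maps, identities, and symmetries, and a strong symmetric monoidal functor preserves all of these operations up to coherent isomorphism, hence preserves the defining equations --- i.e., exactly your secondary ``direct verification'' route, compressed to its conceptual core, with Mac Lane coherence silently absorbing all the bookkeeping you describe in your last paragraph. Your primary route instead invokes the classification of $\Cospan(\FinSet)$ as the PROP for special commutative Frobenius monoids (Lack; cf.\ Proposition~\ref{prop:scfa-canonical} and Remark~\ref{rem:scfa-universal}): an SCFA on $A \in \cC$ is the same thing as a strong symmetric monoidal functor $G_A\colon \Cospan(\FinSet) \to \cC$ with $G_A(1) = A$, and composing with $F$ yields the SCFA on $F(A)$. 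This trades all equational checking for a heavier imported theorem; it is slicker, fits naturally into this paper (which already cites the classification for other purposes), and involves no circularity, since Lack's theorem does not depend on the transport lemma. What your write-up adds over the paper in either route is the explicit identification of the induced structure maps $\mu' = F(\mu)\circ\phi_{A,A}$, $\delta' = \phi_{A,A}^{-1}\circ F(\delta)$, etc.; the paper's phrase ``applying $F$ to the structure maps'' tacitly assumes these coherence-isomorphism insertions, and making them explicit is a genuine improvement in precision. One small caveat: the PROP-algebra correspondence you invoke must be taken in its strong-monoidal form, valid for arbitrary (possibly non-strict) $\cC$ and $\cD$; the paper's own citation of Fong--Spivak Prop.~3.8 states it for strict functors into $\cC$, so you should either cite the strong form of the correspondence or remark that it follows by strictification.
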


\begin{proof}
An SCFA object is defined by a finite diagram of tensor and composition operations satisfying specified equalities. A strong symmetric monoidal functor preserves tensor, unit, symmetry, and composition up to coherent isomorphism; hence it preserves these defining diagrams.
\end{proof}

\subsection{Transport along the comparison functor}
\label{ssec:transport}

Let $(\cA, j, \Phi)$ be a gluing host over $\cB$. By Proposition~\ref{prop:initiality}, $\Phi: \Cospan(\cB) \to \cA$ is the unique (up to canonical isomorphism) strong symmetric monoidal extension of $j$. Define
\[
\mu_\cA := \Phi(\mu), \quad \delta_\cA := \Phi(\delta), \quad \eta_\cA := \Phi(\eta), \quad \epsilon_\cA := \Phi(\epsilon).
\]

\begin{corollary}[SCFA transport along the comparison functor; Corollary~\ref{cor:D}]
\label{cor:transport}
Let $(\cA, j, \Phi)$ be a gluing host over $\cB$. Then $j(1) \in \cA$ carries a canonical special commutative Frobenius algebra structure $(\mu_\cA, \eta_\cA, \delta_\cA, \epsilon_\cA)$. Moreover, the groupoid of compatible Frobenius structures on $j(1)$ -- those arising from a strong symmetric monoidal functor $\Cospan(\cB) \to \cA$ extending $j$ -- is contractible. (We make no claim about the groupoid of \emph{all} SCFA structures on $j(1)$, only about those compatible with the chosen comparison functor in the sense above.) Here $1$ denotes the chosen object of $\cB$ (e.g., the singleton when $\cB = \FinSet$); for an arbitrary base, replace $1$ with the chosen $X \in \cB$ throughout.
\end{corollary}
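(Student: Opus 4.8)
The plan is to deduce both assertions from the initiality of Proposition~\ref{prop:initiality}, using Lemma~\ref{lem:transport} to move the canonical structure and Remark~\ref{rem:scfa-universal} to translate between functors and structures. The existence of the transported structure is a one-line application of the comparison functor; the contractibility statement is, as indicated in the introduction, precisely the uniqueness clause of cospan initiality, repackaged through the functor--structure dictionary.

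For the first claim I would invoke Proposition~\ref{prop:initiality} to obtain the comparison functor $\Phi: \Cospan(\cB)\to\cA$ with $\Phi\circ\iota=j$. Since the chosen object of $\cB$ is fixed by $\iota$ on objects (so $1=\iota(1)$ when $\cB=\FinSet$, and $X=\iota(X)$ in general), we have $\Phi(1)=j(1)$. By Proposition~\ref{prop:scfa-canonical} the object $1\in\Cospan(\cB)$ carries the canonical SCFA quadruple $(\mu,\eta,\delta,\epsilon)$, and Lemma~\ref{lem:transport} shows that its image $(\mu_\cA,\eta_\cA,\delta_\cA,\epsilon_\cA):=(\Phi(\mu),\Phi(\eta),\Phi(\delta),\Phi(\epsilon))$ is again an SCFA structure, now on $j(1)$. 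This is the claimed canonical structure.

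For contractibility I would first reformulate the groupoid of compatible Frobenius structures as the hom-groupoid of strong symmetric monoidal functors $\Cospan(\cB)\to\cA$ extending $j$, with boundary-compatible monoidal natural isomorphisms as morphisms; by Remark~\ref{rem:scfa-universal} transport along such a functor is an equivalence between this functor groupoid and the structure groupoid, so it suffices to show the functor groupoid is contractible. Nonemptiness is the existence half of Proposition~\ref{prop:initiality}. For the hom-sets I would observe that any boundary-compatible natural isomorphism restricts to the identity on the image of $\iota$, so its components at $1=\iota(1)$ and $2=\iota(2)$ are identities; naturality at each SCFA generator then forces the two functors to agree on generators, and since these generate $\Cospan(\cB)$ as a symmetric monoidal category (Fong--Spivak~\cite[Lem.~3.6]{FongSpivak}), the unique isomorphism supplied by the uniqueness clause of Proposition~\ref{prop:initiality} is the identity. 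Hence the functor groupoid has a single object with trivial automorphism group, i.e.\ it is contractible, and contractibility transfers across the equivalence.

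The step I expect to be the main obstacle is the faithful identification of the two groupoids, namely verifying that transport is a genuine equivalence rather than a mere surjection on objects. Essential surjectivity is definitional, since every compatible structure is by hypothesis the transport of the canonical one along some extension of $j$. The delicate point is fidelity on morphisms: one must confirm that a boundary-compatible monoidal natural isomorphism is pinned down by its forced components on the boundary objects, which is exactly where the Fong--Spivak generation result and the boundary constraint combine. Once this is in place, both the existence and the contractibility are immediate consequences of the existence-and-uniqueness content already packaged in Proposition~\ref{prop:initiality}.
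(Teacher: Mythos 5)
Your existence argument is exactly the paper's: the comparison functor $\Phi$ of Proposition~\ref{prop:initiality}, the canonical quadruple of Proposition~\ref{prop:scfa-canonical}, and Lemma~\ref{lem:transport} together put the transported SCFA structure on $\Phi(1)=j(1)$. That half is fine.

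The problem is in the contractibility half, at the step where you conclude that ``the unique isomorphism supplied by the uniqueness clause of Proposition~\ref{prop:initiality} is the identity,'' so that the functor groupoid ``has a single object with trivial automorphism group.'' This strictification is not available. The extensions in question are \emph{strong}, not strict, symmetric monoidal functors, and the uniqueness clause of Proposition~\ref{prop:initiality} only yields that two extensions agree on generators and hence on every morphism ``up to the coherence isomorphisms of the monoidal structure of $\cA$'' --- that caveat is explicit in the paper's own proof of the proposition. Agreement on generators does not force literal equality: the value of a strong monoidal functor on a tensor $f\otimes g$ of morphisms is conjugated by the functor's monoidal structure data, which is not determined by the values on generators alone (unless one additionally reads $\Phi\circ\iota=j$ as an equality of \emph{monoidal} functors, a strictness assumption that neither the statement of Proposition~\ref{prop:initiality} nor the cited Fong--Spivak theorem grants). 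Your argument needs the comparison isomorphism to have identity components at every object (since $\iota$ is bijective on objects), and naturality would then force the two extensions to coincide on the nose; that is precisely the conclusion one cannot extract from ``unique up to unique monoidal natural isomorphism.'' The repair is simple and is what the paper does: do not collapse the functor groupoid to a point. Keep its (possibly many) objects, use the uniqueness clause to obtain a \emph{unique} monoidal natural isomorphism between any two extensions, and transport that isomorphism --- its component at $1$ --- to a unique isomorphism of Frobenius structures on $j(1)$. Connectedness together with uniqueness of these morphisms is already contractibility; no single-object claim is needed.

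A secondary caution on the step you yourself flagged as delicate: the identification of the structure groupoid with the functor groupoid via Remark~\ref{rem:scfa-universal} is legitimate only if the morphisms of the structure groupoid are \emph{defined} to be those induced by $2$-morphisms of extensions, which is how the corollary's parenthetical restriction is meant to be read. If one instead took all Frobenius isomorphisms of $j(1)$ as morphisms, contractibility could fail outright, since Frobenius objects generally admit nontrivial automorphisms; and for a general base $\cB$ (as opposed to $\FinSet$, where $1$ tensor-generates all objects) the component-at-$X$ comparison is an equivalence only because the hom-sets of the functor groupoid are singletons, not because components on the boundary pin down a natural isomorphism.
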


\begin{proof}
Existence of the SCFA structure is immediate from Lemma~\ref{lem:transport} applied to $\Phi$: the canonical SCFA on $1 \in \Cospan(\cB)$ (Proposition~\ref{prop:scfa-canonical}) transports along $\Phi$ to an SCFA on $\Phi(1) = j(1)$.

Contractibility of the groupoid of compatible structures: by the uniqueness clause of Proposition~\ref{prop:initiality}, any strong symmetric monoidal functor $\Phi': \Cospan(\cB) \to \cA$ extending $j$ and preserving SCFA structures is isomorphic to $\Phi$ by a unique monoidal natural isomorphism. This unique isomorphism induces, on the SCFA structure at $j(1)$, a unique isomorphism of Frobenius objects. Hence any two compatible Frobenius structures are related by a unique isomorphism; the groupoid has exactly one connected component and trivial automorphism group at each object, and is therefore contractible.
\end{proof}

\begin{remark}[Conceptual status of Frobenius transport]
\label{rem:transport-conceptual}
Corollary~\ref{cor:transport} states that the Frobenius structure on the boundary object of a gluing host is \emph{not} an independent algebraic choice. It is the image, under the unique comparison functor, of the canonical Frobenius package supplied by pushout/coproduct universality in $\Cospan(\cB)$. Every structural equation holding in $\Cospan(\cB)$ for the canonical Frobenius generators holds identically in every gluing host; no host-specific verification is needed. The \emph{extra} law, as noted in Proposition~\ref{prop:scfa-canonical}, does not hold in $\Cospan(\cB)$ and is therefore not transported; it arises only after the jointly-epic quotient of Theorem~\ref{thm:completeness}.
\end{remark}

\section{Semantic Completion via Free Cocompletion}
\label{sec:semantic}

We pass to the Yoneda envelope of $\FinCorel$ and record the standard structural features that follow.

\subsection{The Yoneda envelope}
\label{ssec:yoneda}

We adopt the following convention throughout this section. We regard $\FinCorel$ as a small $\infty$-category via the canonical inclusion $\Cat \hookrightarrow \Cat_\infty$, so that presheaves $\Fun(\FinCorel\opp, \Spc)$ are interpreted in the standard $\infty$-categorical sense. Here $\Spc$ denotes the $\infty$-category of spaces in the sense of Lurie~\cite{LurieHTT}.

\begin{definition}[Yoneda envelope]
\label{def:yoneda-envelope}
The Yoneda envelope of $\FinCorel$ is
\[
\cW := \Fun(\FinCorel\opp, \Spc),
\]
equipped with the Yoneda embedding $y: \FinCorel \hookrightarrow \cW$ sending each object to its representable presheaf.
\end{definition}

\begin{proposition}[Universal cocompletion]
\label{prop:yoneda-universal}
For any cocomplete $\infty$-category $\cC$, restriction along Yoneda induces an equivalence
\[
\Fun^L(\cW, \cC) \xrightarrow{\ \simeq\ } \Fun(\FinCorel, \cC),
\]
where $\Fun^L$ denotes colimit-preserving functors. Equivalently, every functor $F: \FinCorel \to \cC$ extends uniquely (up to equivalence) to a colimit-preserving functor $\bar F: \cW \to \cC$. Thus $\cW$ is the free cocompletion of $\FinCorel$, characterized uniquely by this universal property.
\end{proposition}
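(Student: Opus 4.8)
The plan is to invoke Lurie's free-cocompletion theorem directly, since the statement is precisely the $\infty$-categorical presheaf universal property applied to the small $\infty$-category $\FinCorel$. First I would note that $\FinCorel$ is small (its objects are the natural numbers and each hom-space is a finite set of equivalence relations, hence discrete), and that under the convention fixed at the start of Section~\ref{sec:semantic} we regard it as an object of $\Cat_\infty$ via the inclusion $\Cat\hookrightarrow\Cat_\infty$. With this in place, the presheaf $\infty$-category $\cW=\Fun(\FinCorel\opp,\Spc)$ together with the Yoneda embedding $y\colon\FinCorel\hookrightarrow\cW$ is exactly the input to Lurie~\cite[Thm.~5.1.5.6]{LurieHTT}, which asserts that for any cocomplete $\infty$-category $\cC$, precomposition with $y$ induces an equivalence
\[
\Fun^L(\cW,\cC)\xrightarrow{\ \simeq\ }\Fun(\FinCorel,\cC),
\]
where $\Fun^L$ denotes the full subcategory of colimit-preserving functors. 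This is verbatim the first assertion of the proposition.

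The key steps are therefore organizational rather than computational. I would (i) confirm that $\FinCorel$ meets the smallness hypothesis of the cited theorem, (ii) identify $\cW$ and $y$ as the presheaf $\infty$-category and its Yoneda embedding in Lurie's sense, and (iii) quote Lurie~\cite[Thm.~5.1.5.6]{LurieHTT} to obtain the restriction equivalence. The ``equivalently'' clause—that every $F\colon\FinCorel\to\cC$ extends, uniquely up to equivalence, to a colimit-preserving $\bar F\colon\cW\to\cC$—is a direct unwinding of the equivalence of functor $\infty$-categories: essential surjectivity gives existence of the extension and full faithfulness gives uniqueness up to contractible choice. The final sentence, characterizing $\cW$ as the free cocompletion of $\FinCorel$ uniquely up to equivalence, is the standard consequence that an object satisfying a universal property is determined up to equivalence; here it follows formally from the equivalence of functor categories being natural in $\cC$, together with the Yoneda lemma for the $2$-category (or $\infty$-category) of cocomplete $\infty$-categories.

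There is essentially no mathematical obstacle: the proposition is a citation of a foundational theorem, and the proof is a matter of checking that our setup matches its hypotheses. The only point requiring mild care is the coherence bookkeeping implicit in the word ``uniquely''—in the $\infty$-categorical setting, uniqueness means uniqueness up to a contractible space of choices, not strict uniqueness, and the extension $\bar F$ is well-defined only up to coherent equivalence. I would flag this explicitly to avoid the misreading that $\bar F$ is literally unique. No genuinely hard step arises; the content is that $\FinCorel$, being a small ordinary category viewed in $\Cat_\infty$, falls squarely within the scope of the free-cocompletion theorem, so the universal property transfers without modification.

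\begin{proof}
The category $\FinCorel$ is small: its objects form the set $\mathbb{N}$, and each hom-set $\FinCorel(m,n)$ is the finite set of equivalence relations on $\underm\sqcup\undern$. Under the convention of Section~\ref{sec:semantic}, we regard $\FinCorel$ as a small $\infty$-category via $\Cat\hookrightarrow\Cat_\infty$, with discrete mapping spaces. The $\infty$-category $\cW=\Fun(\FinCorel\opp,\Spc)$ is then the $\infty$-category of presheaves on $\FinCorel$ in the sense of Lurie~\cite[\S5.1]{LurieHTT}, and $y\colon\FinCorel\hookrightarrow\cW$ is the Yoneda embedding.

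By Lurie~\cite[Thm.~5.1.5.6]{LurieHTT}, for any cocomplete $\infty$-category $\cC$, restriction along $y$ induces an equivalence of $\infty$-categories
\[
\Fun^L(\cW,\cC)\xrightarrow{\ \simeq\ }\Fun(\FinCorel,\cC),
\]
where $\Fun^L$ denotes the full subcategory of colimit-preserving functors. This is the first assertion.

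The ``equivalently'' clause unwinds this equivalence. Essential surjectivity of the restriction functor provides, for each $F\colon\FinCorel\to\cC$, a colimit-preserving $\bar F\colon\cW\to\cC$ with $\bar F\circ y\simeq F$; full faithfulness ensures that the space of such extensions is contractible, so $\bar F$ is determined up to a contractible space of choices (in particular, uniquely up to coherent equivalence). Finally, the equivalence above is natural in $\cC$, so $(\cW,y)$ represents the functor $\cC\mapsto\Fun(\FinCorel,\cC)$ on cocomplete $\infty$-categories; by the Yoneda lemma in $\Cat_\infty$, any two objects with this universal property are canonically equivalent. Hence $\cW$ is the free cocompletion of $\FinCorel$, characterized uniquely up to equivalence by the stated property.
\end{proof}
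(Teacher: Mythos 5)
Your proof takes the same approach as the paper, which simply cites Lurie's free-cocompletion theorem~\cite[Thm.~5.1.5.6]{LurieHTT}; your additional checks (smallness of $\FinCorel$, the unwinding of the equivalence into existence/uniqueness of extensions, and the representability argument for uniqueness of the free cocompletion) are correct elaborations of what the paper leaves implicit. No gap.
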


\begin{proof}
Standard; see Lurie~\cite[Theorem~5.1.5.6]{LurieHTT}.
\end{proof}

\subsection{The \texorpdfstring{$\infty$}{infinity}-topos structure}
\label{ssec:infty-topos}

\begin{proposition}[Yoneda envelope as $\infty$-topos]
\label{prop:yoneda-topos}
$\cW = \Fun(\FinCorel\opp, \Spc)$ is an $\infty$-topos. In particular:
\begin{itemize}[leftmargin=2em]
\item it has finite limits and all small colimits;
\item colimits are stable under pullback;
\item it is locally cartesian closed;
\item it admits a well-behaved doctrine of truncated subobjects.
\end{itemize}
\end{proposition}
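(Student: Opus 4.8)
The plan is to treat this as a direct instance of the general theory of presheaf $\infty$-topoi, reducing the headline claim to a single citation and the four structural bullets to objectwise computation in $\Spc$. First I would record that $\FinCorel$ is small: its objects form the set $\mathbb{N}$, and each hom-set $\FinCorel(m,n)$ is the finite set of equivalence relations on $\underm \sqcup \undern$, so $\FinCorel$ is essentially small, and under the canonical inclusion $\Cat \hookrightarrow \Cat_\infty$ of \S\ref{ssec:yoneda} it is a small $\infty$-category. With smallness in hand, the assertion that $\cW = \Fun(\FinCorel\opp, \Spc)$ is an $\infty$-topos is exactly Lurie's characterization of presheaf $\infty$-categories: for any small $\infty$-category $\cC$, the functor $\infty$-category $\Fun(\cC\opp, \Spc)$ is an $\infty$-topos (Lurie~\cite[Thm.~6.1.0.6]{LurieHTT}). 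This settles the first clause of the proposition outright.

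For the four enumerated properties I would argue that each is either contained in the conclusion of that theorem or follows by computing limits and colimits pointwise. Limits and colimits in a functor $\infty$-category valued in $\Spc$ are computed objectwise, and $\Spc$ is complete and cocomplete; hence $\cW$ has all finite limits and all small colimits. Universality of colimits (stability under pullback) transfers from $\Spc$ objectwise: a pullback square of presheaves is a pullback at every object of $\FinCorel\opp$, and the colimit-stability of $\Spc$ then yields the corresponding stability in $\cW$. Local cartesian closure is one of the defining features of an $\infty$-topos and is already guaranteed by Lurie~\cite[Thm.~6.1.0.6]{LurieHTT}; alternatively it follows from the locally cartesian closed structure of $\Spc$ passed through the objectwise description of each slice $\cW_{/X}$. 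Finally, the truncated-subobject doctrine is the standard theory of $n$-truncation in an $\infty$-topos (Lurie~\cite[\S 5.5.6, \S 6.5]{LurieHTT}): each slice admits a $(-1)$-truncation functor, and the resulting poset of $(-1)$-truncated subobjects is precisely what Proposition~\ref{prop:E} later extracts.

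I do not expect a genuine obstacle, since every clause is either Lurie's theorem verbatim or an objectwise transfer from $\Spc$; the single point requiring care is the passage from an ordinary category to an $\infty$-category. I would make explicit, as the convention of \S\ref{ssec:yoneda} already does, that $\FinCorel$ is viewed through the nerve $\Cat \hookrightarrow \Cat_\infty$, so that $\Fun(\FinCorel\opp, \Spc)$ denotes the $\infty$-categorical functor category rather than an ordinary $\Set$-valued presheaf category. It is the $\infty$-categorical presheaf $\infty$-category for which the topos theorem applies, and conflating it with the $1$-categorical presheaf category would invalidate the structural claims. Once that convention is pinned down, the proof reduces to a citation of Lurie~\cite[Thm.~6.1.0.6]{LurieHTT} together with the objectwise description of limits and colimits in presheaf $\infty$-categories.
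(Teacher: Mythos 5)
Your proposal is correct and takes essentially the same route as the paper, whose proof is the single citation of Lurie's Theorem~6.1.0.6 that every presheaf $\infty$-category on a small $\infty$-category is an $\infty$-topos. The additional material you supply (smallness of $\FinCorel$, objectwise computation of limits and colimits, and the nerve convention) is accurate and consistent with the conventions the paper fixes in \S\ref{ssec:yoneda}, just spelled out in more detail than the paper bothers to.
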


\begin{proof}
This is Lurie~\cite[Theorem~6.1.0.6]{LurieHTT}: every presheaf $\infty$-category on a small $\infty$-category is an $\infty$-topos.
\end{proof}

\begin{remark}[Choice of target $\Spc$]
\label{rem:choice-spc}
The choice of $\Spc$ as target, rather than a truncated alternative such as $\Set\simeq\Spc_{\leq 0}$, is the standard $\infty$-categorical free cocompletion. Set-valued alternatives such as $\Fun(\FinCorel\opp,\Set)$ are recovered by truncation to $0$-truncated presheaves; we use the $\Spc$-valued envelope simply because it is the unrestricted free cocompletion of $\FinCorel$ in the sense of Lurie~\cite[Thm.~5.1.5.6]{LurieHTT}.
\end{remark}

\subsection{Systems, predicates, modalities, fixed points}
\label{ssec:systems-preds}

We record, in compact form, the internal logical furniture of $\cW$ that will be needed by any first-order internal theory phrased in this envelope.

\begin{definition}
\label{def:system}
A \emph{system} is an object $X \in \cW$. A \emph{predicate} on $X$ is a $(-1)$-truncated object over $X$ (i.e., a subobject in the $\infty$-topos sense). We denote the poset of predicates on $X$ by $\Sub_{-1}(X)$.
\end{definition}

\begin{proposition}[Subobjects form a complete Heyting algebra]
\label{prop:sub-heyting}
For every $X \in \cW$, the poset $\Sub_{-1}(X)$ is a complete Heyting algebra.
\end{proposition}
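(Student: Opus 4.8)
The plan is to identify $\Sub_{-1}(X)$ as a complete lattice satisfying the infinite distributive law, and then to note that any such lattice is automatically a complete Heyting algebra. Concretely, once arbitrary meets and joins exist and $U \wedge \bigvee_i V_i = \bigvee_i (U \wedge V_i)$ holds for every small family $\{V_i\}$, the operation $(-) \wedge U$ preserves all joins, so the poset adjoint functor theorem supplies a right adjoint $U \Rightarrow (-)$, which is the Heyting implication. Every ingredient is drawn from the $\infty$-topos structure on $\cW$ recorded in Proposition~\ref{prop:yoneda-topos}; nothing beyond the Giraud-type axioms is needed.

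First I would verify that $\Sub_{-1}(X)$ is a genuine poset. Its elements are monomorphisms $U \hookrightarrow X$ (equivalently $(-1)$-truncated maps) taken up to equivalence, and in the slice $\cW$-over-$X$ the mapping space between two monomorphisms is itself $(-1)$-truncated; hence there is at most one morphism in each direction, and the induced order is antisymmetric. This reduces the problem to the construction of meets, joins, and the distributive law (Lurie~\cite{LurieHTT}, \S6.1).

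Next I would construct the lattice operations. For meets, given any small family $\{U_i\}$, the wide pullback $\bigwedge_i U_i := \lim_i U_i$ over $X$ exists because presheaf $\infty$-categories are complete, and monomorphisms are stable under limits and composition, so the result is again a subobject of $X$ carrying the universal property of the greatest lower bound. For joins, I would take the coproduct $\coprod_i U_i$ and factor the canonical map to $X$ through the effective-epi/mono factorization system of the $\infty$-topos; the monomorphism part is $\bigvee_i U_i$, using that $\cW$ has all small colimits.

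The crux, and where I expect the real work, is the infinite distributive law. I would pull the join diagram back along a fixed monomorphism $U \hookrightarrow X$. By universality of colimits, base change $U \times_X (-)$ preserves the coproduct $\coprod_i V_i$; by stability of the effective-epi/mono factorization under pullback, it also preserves the image. Combining these yields $U \times_X \bigvee_i V_i \simeq \bigvee_i (U \times_X V_i)$, which is exactly the distributive law once intersection of subobjects is identified with pullback. The main obstacle is precisely this stability: the image is computed as a colimit (of the \v{C}ech nerve of $\coprod_i V_i \to X$), so the argument turns on colimits and the factorization system being preserved by base change. I would invoke these as the Giraud axioms packaged in Proposition~\ref{prop:yoneda-topos} (Lurie~\cite{LurieHTT}) rather than reprove them, after which the Heyting implication follows formally as in the first paragraph.
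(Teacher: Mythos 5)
Your proof is correct, but it takes a genuinely different route from the paper's. The paper gets the Heyting implication directly from local cartesian closedness of $\cW$ (Proposition~\ref{prop:yoneda-topos}): pullback along $U \hookrightarrow X$ has a right adjoint (dependent product), and restricting this adjunction to subobjects immediately yields $U \Rightarrow (-)$, with completeness of the lattice inherited from small limits and colimits in $\cW$; no distributive law is ever mentioned. You instead prove that $\Sub_{-1}(X)$ is a \emph{frame} — constructing joins as images via the effective-epi/mono factorization, establishing the infinite distributive law from descent (universality of colimits) plus base-change stability of the factorization system — and then recover implication by the poset adjoint functor theorem. Both arguments are sound. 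Yours is longer but more informative: it makes explicit exactly which Giraud-type axioms carry the weight (descent and pullback-stability of effective epis), it works verbatim in any $\infty$-topos rather than leaning on presheaf-specific completeness (though note your construction of meets as wide pullbacks does use small limits, which holds for $\cW$ but is not listed in Proposition~\ref{prop:yoneda-topos} — alternatively, meets come for free from joins in any complete join-semilattice), and it exhibits the frame structure that is strictly what Proposition~\ref{prop:fixed-points} consumes downstream. The paper's route is the shorter one when local cartesian closedness is already on the table, since it bypasses the factorization-system bookkeeping entirely. One shared gloss: both you and the paper implicitly use that $\Sub_{-1}(X)$ is a small poset (well-poweredness of $\cW$), which is needed for ``complete lattice'' and for your adjoint-functor-theorem step to apply in the usual form.
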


\begin{proof}
Standard for any $\infty$-topos: the $(-1)$-truncated objects over $X$ form a poset closed under small joins and meets (inherited from small colimits and limits in $\cW$), and local cartesian closedness (Proposition~\ref{prop:yoneda-topos}) supplies Heyting implication. See Lurie~\cite[\S 6.1.0]{LurieHTT} for the topos-theoretic facts, and Shulman~\cite{ShulmanElementary} for the corresponding development of internal logic in $(\infty,1)$-topoi.
\end{proof}

\begin{proposition}[Adjoint calculus]
\label{prop:adjoint-calc}
For any map $f: X \to Y$ in $\cW$, pullback along $f$ on subobjects admits both a dependent sum (left adjoint) and a dependent product (right adjoint):
\[
\exists_f \dashv f^* \dashv \forall_f.
\]
\end{proposition}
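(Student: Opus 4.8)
The plan is to derive the entire adjoint triple by restricting the slice-level string $\Sigma_f\dashv f^*\dashv\Pi_f$ to the subposets of $(-1)$-truncated objects. First I would invoke Proposition~\ref{prop:yoneda-topos}: since $\cW$ is an $\infty$-topos it is locally cartesian closed, so for every map $f\colon X\to Y$ the pullback $f^*\colon\cW_{/Y}\to\cW_{/X}$ admits a left adjoint $\Sigma_f$ (postcomposition with $f$) and a right adjoint $\Pi_f$ (dependent product), by Lurie~\cite[\S 6.1]{LurieHTT}. By Definition~\ref{def:system} the posets $\Sub_{-1}(X)$ are exactly the full subcategories of $(-1)$-truncated objects of $\cW_{/X}$, so it remains to show that each functor restricts to these subcategories and that adjointness survives restriction.

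The two right-hand functors restrict directly. Pullback sends monomorphisms to monomorphisms, so $f^*$ restricts to a monotone map $f^*\colon\Sub_{-1}(Y)\to\Sub_{-1}(X)$; and the dependent product preserves $n$-truncated objects for all $n$ in any $\infty$-topos (Lurie~\cite[\S 5.5.6, \S 6.1]{LurieHTT}), so setting $\forall_f:=\Pi_f|_{\Sub_{-1}(X)}$ gives $\forall_f\colon\Sub_{-1}(X)\to\Sub_{-1}(Y)$. Because both $f^*$ and $\Pi_f$ preserve the full subcategories in question, the adjunction $f^*\dashv\Pi_f$ restricts verbatim to $f^*\dashv\forall_f$ on subobjects.

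The one step that is not a verbatim restriction is the left adjoint, because $\Sigma_f$ does not preserve $(-1)$-truncatedness: the composite $U\rightarrowtail X\xrightarrow{f}Y$ need not be monic. I would therefore define $\exists_f:=\tau_{\leq-1}\circ\Sigma_f$, where $\tau_{\leq-1}\colon\cW_{/Y}\to\Sub_{-1}(Y)$ is the image/$(-1)$-truncation functor, left adjoint to the inclusion $\Sub_{-1}(Y)\hookrightarrow\cW_{/Y}$ by the truncation machinery of Lurie~\cite[\S 5.5.6]{LurieHTT}. Composing the two adjunctions then gives, for $U\in\Sub_{-1}(X)$ and $V\in\Sub_{-1}(Y)$, the equivalences $\exists_f(U)\leq V\iff\Sigma_f(U)\leq V\iff U\leq f^*(V)$: the first uses the universal property of $\tau_{\leq-1}$ against the already-truncated $V$ (for which the space of slice maps $\Sigma_f(U)\to V$ is $(-1)$-truncated, so ``$\Sigma_f(U)\leq V$'' is a genuine property), and the second is the slice adjunction $\Sigma_f\dashv f^*$ together with $(-1)$-truncatedness of $f^*(V)$. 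This yields $\exists_f\dashv f^*$.

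The hard part is purely organizational rather than mathematical: verifying that the $\infty$-categorical truncation and dependent-product functors restrict correctly to the $(-1)$-truncated level, and that adjunctions of $\infty$-categories descend to ordinary poset adjunctions on $\Sub_{-1}$ (they do, since the relevant mapping spaces between subobjects are already $(-1)$-truncated, so all higher coherence collapses). None of these steps uses any feature peculiar to $\FinCorel$; the argument is uniform across all presheaf $\infty$-topoi, and the completeness in Proposition~\ref{prop:sub-heyting} ensures that the suprema and infima implicit in the image and dependent-product constructions exist. For the internal-logic packaging of these adjoints I would additionally cite Shulman~\cite{ShulmanElementary}.
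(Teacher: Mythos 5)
Your proposal is correct, and it is the detailed unfolding of what the paper compresses into a single line: the paper's proof reads, in full, ``Immediate from local cartesian closedness of $\cW$.'' Both arguments rest on the same machinery -- the slice-level string $\Sigma_f \dashv f^* \dashv \Pi_f$ supplied by Proposition~\ref{prop:yoneda-topos}, restricted to $(-1)$-truncated objects -- so there is no divergence of route. What your version adds, and what is worth noting, is that the statement is \emph{not} literally immediate from local cartesian closedness alone: LCC gives $f^* \dashv \forall_f$ directly (pullback and $\Pi_f$ both preserve $(-1)$-truncated objects, the latter because truncatedness is a limit condition and $\Pi_f$ is a right adjoint), but the left adjoint $\exists_f$ requires the extra ingredient you correctly isolate, namely the $(-1)$-truncation/image factorization $\tau_{\leq -1} \circ \Sigma_f$, since $\Sigma_f$ alone does not preserve monomorphisms. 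That factorization is part of the $\infty$-topos structure (Lurie's truncation machinery), not of local cartesian closedness per se, so your writeup is strictly more careful than the paper's; the composite-adjunction argument $\exists_f(U) \leq V \iff \Sigma_f(U) \leq V \iff U \leq f^*(V)$ and the observation that poset-level adjointness follows because mapping spaces between subobjects are $(-1)$-truncated are both sound. In short: same approach, but your proof would survive a referee asking ``why is this immediate?'' where the paper's would need exactly the paragraph you wrote.
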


\begin{proof}
Immediate from local cartesian closedness of $\cW$.
\end{proof}

\begin{definition}[Relational modalities]
\label{def:modalities}
A \emph{one-step dynamics} on $X \in \cW$ is a $(-1)$-truncated subobject $R \hookrightarrow X \times X$. Given such $R$, define the relational modalities on $\Sub_{-1}(X)$ by
\[
\Diamond_R(A) := \exists_{\pi_1}(R \wedge \pi_2^*(A)), \qquad \Box_R(A) := \forall_{\pi_1}(R \Rightarrow \pi_2^*(A)).
\]
These are monotone endomaps of $\Sub_{-1}(X)$.
\end{definition}

\begin{proposition}[Existence of least and greatest fixed points]
\label{prop:fixed-points}
Let $X \in \cW$ and $m \geq 1$. Every monotone map $F: \Sub_{-1}(X)^m \to \Sub_{-1}(X)^m$ admits a least fixed point $\mu F$ and a greatest fixed point $\nu F$.
\end{proposition}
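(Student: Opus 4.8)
The plan is to reduce the statement to the classical Knaster--Tarski fixed point theorem \cite{Tarski1955}, applied to the poset $\Sub_{-1}(X)^m$ equipped with its componentwise order. The only structural inputs required are that this poset is a complete lattice and that $F$ is monotone, the latter being exactly the hypothesis. No further topos-theoretic input beyond the completeness already recorded is needed.

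First I would invoke Proposition~\ref{prop:sub-heyting}: for each $X \in \cW$ the poset $\Sub_{-1}(X)$ is a complete Heyting algebra, hence in particular a complete lattice, so every family of predicates has both a meet and a join. Next I would note the elementary fact that a finite product of complete lattices, ordered componentwise, is again a complete lattice, with meets and joins computed coordinatewise; hence $\Sub_{-1}(X)^m$ is a complete lattice. With this in hand I would apply Knaster--Tarski directly to the monotone endomap $F$. The least fixed point is $\mu F = \bigwedge \{\, a : F(a) \le a \,\}$ and the greatest is $\nu F = \bigvee \{\, a : a \le F(a) \,\}$; monotonicity of $F$ shows in the standard way that each of these is genuinely a fixed point and is extremal among all fixed points. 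The set of pre-fixed points contains the top element and the set of post-fixed points contains the bottom element, so neither the infimum nor the supremum is taken over the empty family.

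I do not expect a genuine obstacle here: the proposition is simply the packaging of Knaster--Tarski with the completeness established in Proposition~\ref{prop:sub-heyting}. The only point meriting a line of care is confirming that the product order on $\Sub_{-1}(X)^m$ is complete and that monotonicity of $F$ for this order is precisely the stated hypothesis; both are immediate. As the intended application, the case $m = 1$ specializes to the relational modalities $\Diamond_R$ and $\Box_R$ of Definition~\ref{def:modalities}, which are monotone endomaps of $\Sub_{-1}(X)$ and therefore possess least and greatest fixed points, supplying the monotone fixed-point semantics referred to in the introduction.
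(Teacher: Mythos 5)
Your proposal is correct and follows exactly the paper's own route: Proposition~\ref{prop:sub-heyting} gives completeness of $\Sub_{-1}(X)$, the componentwise product of complete lattices is complete, and Knaster--Tarski~\cite{Tarski1955} finishes the argument. The explicit formulas $\mu F = \bigwedge\{a : F(a) \le a\}$ and $\nu F = \bigvee\{a : a \le F(a)\}$ are a harmless elaboration of what the paper cites as a black box.
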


\begin{proof}
By Proposition~\ref{prop:sub-heyting}, $\Sub_{-1}(X)$ is a complete lattice; products of complete lattices are complete, so $\Sub_{-1}(X)^m$ is a complete lattice. The Knaster--Tarski theorem~\cite{Tarski1955} asserts that every monotone endomap on a complete lattice admits least and greatest fixed points.
\end{proof}

\begin{remark}[Scope of the fixed-point theorem]
\label{rem:fixed-point-scope}
Proposition~\ref{prop:fixed-points} establishes only the \emph{existence} of fixed points externally, via Knaster--Tarski applied to the complete lattice $\Sub_{-1}(X)^m$. No claim is made about internal $\mu$-calculus completeness, expressive completeness of any $\mu$-calculus fragment, or bisimulation invariance. The present paper asserts only that the structures needed for first-order internal logic and monotone fixed-point semantics exist canonically in the Yoneda envelope.
\end{remark}

\section{Scope, Limits, and Relation to Companion Work}
\label{sec:scope}

\subsection{Scope clarifications}
\label{ssec:scope-clarifications}

Three scope clarifications follow.

First, the image of the ancestry functor of Section~\ref{sec:syntax} is a \emph{proper} sub-PROP of $\FinCorel$: Theorem~\ref{thm:cocom} identifies $\AncQ$ with $\Cocom$, not with the full PROP of finite corelations. The passage to the full $\FinCorel$ occurs at the cospan-realization level (Theorem~\ref{thm:completeness}), where pushout composition supplies the unit, counit, and multiplication generators absent from the primitive calculus. The identification $\FinCorel \simeq \ESCFM$ is Coya--Fong~\cite[Thm.~4.6]{CoyaFong2017}.

Second, the modal and fixed-point operators of Section~\ref{sec:semantic} are internal to $\cW$ and arise from the adjoint calculus of subobjects together with Knaster--Tarski. No external correspondence with any specific modal logic or $\mu$-calculus fragment is asserted; in particular, no completeness, soundness, or expressivity claim against an external semantics is made.

Third, the framework of Section~\ref{sec:gluing} (cospan/corelation realization, hypergraph categories, gluing hosts) is treated here at the level of universal properties and PROP-level classifications; the diagrammatic \emph{rewriting theory} over Frobenius PROPs is not addressed. For that line of work, see Bonchi--Gadducci--Kissinger--Soboci\'nski--Zanasi~\cite[Thm.~4.1, Thm.~4.9]{BonchiSDRT1}, which establishes (i) the combinatorial characterization of the free FROP on a signature $\Sigma$ as discrete cospans of $\Sigma$-labelled hypergraphs, and (ii) the soundness/completeness of double-pushout-with-interfaces (DPOI) hypergraph rewriting as an implementation of string-diagram rewriting modulo the axioms of a hypergraph category.

\subsection{Relation to the companion philosophy paper}
\label{ssec:companion}

This paper develops, in categorical mathematics, the architectural sketch in the concluding section of~\cite{BallusPhilosophy}. It does not formalize the genealogical or historical-philosophical content of that paper, and no claim of one-to-one correspondence between the two texts is made. The mathematical constructions here are used in~\cite{BallusPhilosophy} as a reference; the conceptual content of that paper does not enter the present arguments.

\subsection{Relation to version 1}
\label{ssec:v1}

This paper is v2 of arXiv:2505.22931. v2 takes the generator $\delta:1\to 2$ of the free PROP $\Syn(\delta)$ as primitive; v1 used a different categorical setup. The ancestry quotient of v2 corresponds, conceptually, to a structure sketched in v1, but is formalized here via the kernel congruence of a strict symmetric monoidal functor (Definition~\ref{def:ancestry-cong}); no formal equivalence with v1 is claimed.


\end{document}